\newtheorem{theo}{Theorem}[section]
\newtheorem{lem}{Lemma}[section]
\newtheorem{rem}{Remark}[section]
\newtheorem{dif}{Definition}[section]
\newtheorem{coro}{Corollary}[section]
\begin{document}
\begin{center}
\noindent{\bf\Large Estimation of the Sobolev embedding constant on domains with minimally smooth boundary}\vspace{10pt}\\
{\normalsize Kazuaki Tanaka$^{1,*}$, Kouta Sekine$^{2}$, Makoto Mizuguchi$^{1}$, Shin'ichi Oishi$^{2,3}$}\vspace{5pt}\\
{\it\normalsize $^{1}$Graduate School of Fundamental Science and Engineering, Waseda University,\\
$^{2}$Faculty of Science and Engineering, Waseda University,\\
$^{3}$CREST, JST}
\end{center}
{\bf Abstract}. In this paper, we propose a method for estimating the Sobolev type embedding constant from $W^{1,q}(\Omega)$ to $L^{p}(\Omega)$ on a domain $\Omega\subset \mathbb{R}^{n}$~$(n=2,3,\cdots)$ with minimally smooth boundary, where $p\in(n/(n-1),\infty)$ and $q=np/(n+p)$.
We estimat the embedding constant by constructing an extension operator from $W^{1,q}(\Omega)$ to $W^{1,q}(\mathbb{R}^{n})$ and computing its operator norm.
We also present some examples of estimating the embedding constant for certain domains.\vspace{0.2cm}\\
{\it Key words:} embedding constant;~extension operator;~Sobolev inequality
\renewcommand{\thefootnote}{\fnsymbol{footnote}}
\footnote[0]{{\it E-mail addresse:} $^{*}$\href{mailto:imahazimari@fuji.waseda.jp}{\nolinkurl{imahazimari@fuji.waseda.jp}}\\[-3pt]}
\renewcommand\thefootnote{*\arabic{footnote}}

\section{Introduction}
Let $\Omega\subset \mathbb{R}^{n}~(n=2,3,\cdots)$ be a domain with minimally smooth boundary, whose definition will be introduced in Definition \ref{def/Lipschitz}.
We are concerned with a concrete value of the embedding constant $C_{p}(\Omega)$ from $W^{1,q}(\Omega)$ to $L^{p}(\Omega)$, i.e., $C_{p}(\Omega)$ satisfies
\begin{align}
\left\|u\right\|_{L^{p}(\Omega)}\leq C_{p}(\Omega)\left\|u\right\|_{W^{1,q}(\Omega)},~~\forall u\in W^{1,q}(\Omega),\label{purpose}
\end{align}
where $p\in(n/(n-1),\infty)$, $q=np/(n+p)$,
and the norm $\left\|\cdot\right\|_{W^{1,q}(\Omega)}$ denotes the $\sigma$-weighted $W^{1,q}$ norm defined as
\begin{align}
\left\|\cdot\right\|_{W^{1,q}(\Omega)}^{q}:=\left\|\nabla\cdot\right\|_{L^{q}(\Omega)}^{q}+\sigma\left\|\cdot\right\|_{L^{q}(\Omega)}^{q}\label{sigmanorm}
\end{align}
for given $\sigma>0$.

 Since the Sobolev type embedding theorems are important in studies on partial differential equations (PDEs),
there have been a lot of works on such theorems and their applications, e.g., \cite{aubin1976, aubin1982book, beckner1993, brezis1983, calderon1961, edmunds1987book, hestenes1941, lieb1983, nakao2001numerical, plum2001computer, plum2008, stein1970book, takayasu2013verified, talenti1976, watanabe2009symmetrization, whitney1934}.
In particular, a concrete value of the embedding constant is indispensable for verified numerical computation and compute-assisted proof for PDEs; see, e.g., \cite{nakao2001numerical, plum2001computer, plum2008, takayasu2013verified}.
We shall remark that the best constant in the classical Sobolev inequality on $\mathbb{R}^{n}$ was independently shown by Aubin \cite{aubin1976} and Talenti \cite{talenti1976} in 1976 (see Theorem \ref{talentitheo}).
Moreover, since all elements $u$ in $W_{0}^{k,q}(\Omega)$, the closure of $C_{0}^{\infty}(\Omega)$ commonly defined, can be regarded as elements of $W^{k,q}(\mathbb{R}^{n})$ by zero extension outside $\Omega$, the embedding constant satisfying (\ref{purpose}) with the restriction $u\in W_{0}^{k,q}(\Omega)$ can be also estimated for a general domain $\Omega\subset \mathbb{R}^{n}$ by calculating the classical embedding constant on $\mathbb{R}^{n}$.
Removing the restriction, however, such a simple extension cannot be constructed.
To estimate the embedding constant without the restriction, we construct a linear and bounded operator $E$ from $W^{1,q}(\Omega)$ to $W^{1,q}(\mathbb{R}^{n})$ such that $(Eu)(x)=u(x)$ for all $x\in\Omega$, which is called the extension operator from $W^{1,q}(\Omega)$ to $W^{1,q}(\mathbb{R}^{n})$.
We then estimate bounds for the operator norm $A_{q}\left(\Omega\right)$ of 
$E$ satisfying
\begin{align}
\left\|\nabla\left(Eu\right)\right\|_{L^{q}\left(\mathbb{R}^{n}\right)}\leq A_{q}\left(\Omega\right)\left(\left\|\nabla u\right\|_{L^{q}\left(\Omega\right)}+\sigma^{1/q}\left\|u\right\|_{L^{q}\left(\Omega\right)}\right),~~\forall u\in W^{1,q}(\Omega),\label{intro/aq}
\end{align}
which will lead bounds for the embedding constant.
There have been some construction methods for the extension operators.
For example, the reflection method originates from Whitney \cite{whitney1934} and Hestenes \cite{hestenes1941}, whose summary can be found in, e.g., \cite{adams2003book, brezis2011book}.
The Calder\'on extension theorem originates from \cite{calderon1961}, 
which is summarized in, e.g., \cite{adams2003book}.
Moreover, Stein \cite{stein1970book} has shown that extension operators can be constructed on domains with minimally smooth boundary.

 The main contribution of this paper is to propose a formula giving a concrete value of $A_{q}\left(\Omega\right)$ for the extension operator constructed by Stein's method.
Stein first constructed an extension operator on the special Lipschitz domain and then expanded this to that on domains with minimally smooth boundary.
In his method, the regularized distance plays an important role, which is a $C^{\infty}$ function approximating the distance from a given closed set $S\subset \mathbb{R}^{n}$ to any point in its complement $S^{c}$.
After the appearance of Stein's construction method, the regularized distance was generalized to a one-parameter family of smooth functions by Fraenkel \cite{fraenkel1979}.
We will construct extension operators using Stein's method with the generalized regularized distance to derive the embedding constant.
\if0
This paper consists of the following sections.
In Section \ref{pre/sec}, necessary notation is introduced.
In Section \ref{Stein/construction}, we explain how to construct the extension operators for domains with minimally smooth boundary.
In Section \ref{main/theo/sec}, we propose formulas for estimating the operator norm and the embedding constant.
In Section \ref{result}, we present results of estimating the embedding constants for some concrete domains.
\fi
\section{Preparation}\label{pre/sec}
Through out this paper, the following notation is used:
\begin{itemize}
\setlength{\parskip}{0mm}
\setlength{\itemsep}{0mm}
\item$\mathbb{N}=\{1,2,3,\cdots\}$ and $\mathbb{N}_{0}=\{0,1,2,\cdots\}$;
\item$B\left(x,r\right)$ is the open ball whose center is $x$ and whose radius is $r\geq 0$;
\item for any point $x=\left(x_{1},x_{2},\cdots x_{n}\right)\in \mathbb{R}^{n}$, define $\left|x\right|:=(\left|x_{1}\right|^{2}+\left|x_{2}\right|^{2}+\cdots+\left|x_{n}\right|^{2})^{\frac{1}{2}}$;
\item for any set $S\subset \mathbb{R}^{n},\ S^{c}$ is its complementary set and $\overline{S}$ is its closure set;
\item for any set $S\subset \mathbb{R}^{n}$ and any $\varepsilon>0$, define $S^{\varepsilon}:=\{x\in \mathbb{R}^{n}\ :\ B\left(x,\varepsilon\right)\subset S\}$;
\item for any point $x\in \mathbb{R}^{n}$ and any set $S\subset \mathbb{R}^{n}$, define $\displaystyle \mathrm{dist}\,(x,S):=\inf\{\left|x-y\right|\ :\ y\in S\}$;
\item for any function $f,\ \mathrm{supp}$\,$f$ denotes the support of $f$;
\item for any function $f$ over $\mathbb{R},\ f'$ denotes the ordinary derivative of $f$;
\item for any function $f$ over $\mathbb{R}^{n}~\left(n=2,3,\cdots\right),\ \partial_{x_{i}}f$ denotes the partial derivative of $f$ with respect to the $i$-th component $x_{i}$ of $x$.
\end{itemize}
Let $L^{p}\left(\Omega\right)$~$(1\leq p<\infty)$ be the functional space of $p$-th power Lebesgue integrable functions over $\Omega$.
Let $W^{k,p}(\Omega)$~$(k\in \mathbb{N},\ 1\leq p<\infty)$ be the $k$-th order $L^{p}$ Sobolev space on $\Omega$; in particular, we denote $H^{k}\left(\Omega\right):=W^{k,2}(\Omega)$.

\begin{dif}[Mollifier]\label{mollifier}
A nonnegative function $\rho\in C^{\infty}\left(\mathbb{R}^{n}\right)$ is said to be a mollifier if
\begin{align*}
\rho\left(x\right)=0\ \mathrm{for}\ \left|x\right|\geq 1
~~{\rm and}~~
\displaystyle \int_{\mathbb{R}^{n}}\rho\left(x\right)dx=1.
\end{align*}
\end{dif}
For example, the function
\begin{align}
\rho\left(x\right):=
\left\{\begin{array}{l l}
c\exp\left(\frac{-1}{1-\left|x\right|^{2}}\right),&\left|x\right|<1,\\
0,&\left|x\right|\geq 1
\end{array}\right. \label{sp/mol}
\end{align}
becomes a mollifier, where $c$ is chosen so that $\displaystyle \int_{\mathbb{R}^{n}}\rho\left(x\right)dx=1$.

In the following lemma, existence of a $C^{\infty}$ function approximating Lipschitz continuous functions is guaranteed. 
\begin{lem}[L.E.~Fraenkel \cite{fraenkel1979}]\label{fra}
Let $f:\mathbb{R}^{n}\rightarrow \mathbb{R}$ be a function satisfying Lipschitz continuous\index{Lipschitz continuous} condition, i.e.,
\begin{align*}
\left|f\left(x\right)-f\left(y\right)\right|\leq M\left|x-y\right|,\ \ \forall x,y\in \mathbb{R}^{n}
\end{align*}
holds for some $M>0$.
Suppose that there is an open set $G\subset \mathbb{R}^{n}$, s.t., $f(x)>0$ for all $x\in G$.
Then, for given any $\varepsilon\in(0,1)$, there is a function $g\in C^{\infty}(G)$ such that, for all $x\in G$
\begin{align}
&\left(1+\varepsilon\right)^{-2}f\left(x\right)\leq g\left(x\right)\leq\left(1-\varepsilon\right)^{-2}f\left(x\right),
\shortintertext{and}
&\left|\frac{\partial^{\alpha}}{\partial x^{\alpha}}g\left(x\right)\right|\leq P_{\alpha}M^{\alpha}\left\{\varepsilon f\left(x\right)\right\}^{1-\left|\alpha\right|},~~\forall\alpha\in \mathbb{N}_{0}^{n}\ \mathrm{with}\ \left|\alpha\right|\geq 1.\label{frankel1}
\end{align}
Here, $P_{\alpha}$ is a constant depending only on $\alpha$.
\end{lem}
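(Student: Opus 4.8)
The plan is to build $g$ by a variable-scale smooth partition of unity on $G$ adapted to $f$, in the spirit of Stein's construction of the regularized distance but with the approximation parameter $\varepsilon$ written into the scale. Fix a small absolute constant $\kappa\in(0,1)$ (say $\kappa=1/2$) and, for $x\in G$, set the local length scale $\ell(x):=\kappa\varepsilon f(x)/M$. By the Lipschitz hypothesis, on each ball $B(x,\ell(x))$ the function $f$ oscillates by at most $M\ell(x)=\kappa\varepsilon f(x)$; in particular every point of $B(x,\ell(x))$ satisfies $f>0$, and $\ell$ is itself Lipschitz with the small constant $\kappa\varepsilon$, so radii of nearby balls are mutually comparable. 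A Whitney/Besicovitch-type covering argument then yields a locally finite cover of $G$ by balls $B_{j}=B(x_{j},\ell(x_{j}))$, $x_{j}\in G$, whose overlap is bounded by a constant $N=N(n)$, together with an associated smooth partition of unity $\{\varphi_{j}\}\subset C_{0}^{\infty}(G)$, $\sum_{j}\varphi_{j}\equiv1$ on $G$, obtained by mollifying the indicator of $B_{j}$ at scale $\ell(x_{j})$ and normalizing; the bounded overlap pins the normalizing denominator between positive constants, giving $|\partial^{\alpha}\varphi_{j}(x)|\le C_{\alpha}\,\ell(x_{j})^{-|\alpha|}$ with $C_{\alpha}=C_{\alpha}(n)$. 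I would then define
\[
 g(x):=\sum_{j}f(x_{j})\,\varphi_{j}(x),\qquad x\in G,
\]
which near any point is a finite sum of smooth functions, hence $g\in C^{\infty}(G)$.

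For the two-sided estimate, fix $x\in G$; only indices $j$ with $x\in\operatorname{supp}\varphi_{j}\subset B_{j}$ contribute, and for these $|f(x_{j})-f(x)|\le M\ell(x_{j})=\kappa\varepsilon f(x_{j})$, hence $(1+\kappa\varepsilon)^{-1}f(x)\le f(x_{j})\le(1-\kappa\varepsilon)^{-1}f(x)$. Summing against $\sum_{j}\varphi_{j}(x)=1$ gives $(1+\kappa\varepsilon)^{-1}f(x)\le g(x)\le(1-\kappa\varepsilon)^{-1}f(x)$, and an elementary power-series comparison shows that for $\kappa\le1$ this interval lies inside $[(1+\varepsilon)^{-2}f(x),(1-\varepsilon)^{-2}f(x)]$. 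For the derivative bound with $|\alpha|\ge1$, the key point is that $\sum_{j}\partial^{\alpha}\varphi_{j}\equiv0$, so one may subtract $f(x)$ and write $\partial^{\alpha}g(x)=\sum_{j}\bigl(f(x_{j})-f(x)\bigr)\partial^{\alpha}\varphi_{j}(x)$; each contributing term is then at most $|f(x_{j})-f(x)|\cdot|\partial^{\alpha}\varphi_{j}(x)|\le\kappa\varepsilon f(x_{j})\cdot C_{\alpha}\ell(x_{j})^{-|\alpha|}$, and since $f(x_{j})$ and $\ell(x_{j})$ are comparable to $f(x)$ and $\ell(x)=\kappa\varepsilon f(x)/M$ respectively and there are at most $N$ such terms, this sums to $|\partial^{\alpha}g(x)|\le P_{\alpha}M^{|\alpha|}\{\varepsilon f(x)\}^{1-|\alpha|}$ with $P_{\alpha}$ depending only on $\alpha$ (and $n$); note that the whole $\varepsilon$-dependence has emerged explicitly as the factor $\varepsilon^{1-|\alpha|}$, which is exactly why subtracting $f(x)$ — thereby gaining one power of $\varepsilon$ over the naive estimate $\sum|f(x_{j})|\,|\partial^{\alpha}\varphi_{j}|$ — is indispensable.

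The step I expect to carry the real weight is the covering-and-partition-of-unity construction made quantitative: producing the locally finite cover of $G$ with a dimension-only overlap bound (this is where the slow variation of $\ell$, i.e. its small Lipschitz constant $\kappa\varepsilon$, is used) and transferring it to functions $\varphi_{j}$ with $\ell(x_{j})$-scaled derivative bounds. The one genuinely delicate issue is ensuring $\operatorname{supp}\varphi_{j}\subset G$: this holds because $B(x_{j},\ell(x_{j}))$ consists of points with $f>0$, which lie in $G$ in the situations of interest — in particular when $G$ is exactly the positivity set of $f$, as for $f=\mathrm{dist}(\cdot,S)$ and $G=S^{c}$ with $S$ closed, the setting in which the lemma is subsequently applied.
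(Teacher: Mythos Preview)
The paper does not supply a proof of this lemma; it is quoted from Fraenkel, with only the explicit integral formula \eqref{fra/Palpha} for $P_{\alpha}$ recorded in the Remark that follows. Your Whitney-cover/partition-of-unity construction is a valid and essentially complete argument for the statement, but it is a genuinely different route from Fraenkel's. The shape of the formula in the Remark---a single integral over the unit ball involving derivatives of the fixed mollifier $\rho$, with the telltale factors $(1+|y|)^{|\beta|}$ and $(1-|y|)^{-1}$---reveals that Fraenkel builds $g$ by a direct variable-scale mollification of $f$ (the scale being proportional to $\varepsilon f/M$, fed back through an auxiliary smoothing of $f$ itself), rather than by averaging frozen values $f(x_{j})$ over a cover. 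What Fraenkel's approach buys, and what the paper actually needs, is an explicit computable $P_{\alpha}$: the constant $P=P_{\alpha}$ with $|\alpha|=1$ is evaluated numerically in Section~\ref{result} and enters the operator-norm bound of Lemma~\ref{sp/lemma} through the quantity $B$. Your construction, while conceptually clean, buries $P_{\alpha}$ inside a Besicovitch overlap number $N(n)$ and the quotient-rule constants arising from normalizing $\{\varphi_{j}\}$, so it proves existence but would not plug into the paper's verified numerics.

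One small remark: the issue you flag as ``genuinely delicate''---whether $\operatorname{supp}\varphi_{j}\subset G$---is in fact harmless for the lemma as stated. Your $\varphi_{j}$ are smooth on all of $\mathbb{R}^{n}$ and the sum defining $g$ is locally finite, so $g|_{G}\in C^{\infty}(G)$ automatically; the pointwise two-sided and derivative bounds on $G$ use only the centre values $f(x_{j})$ with $x_{j}\in G$, and nothing breaks if some balls protrude beyond $G$.
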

\begin{rem}
One of the concrete values of $P_{\alpha}$ can be derived as follows:
Let $\rho$ be the mollifier defined in $(\ref{sp/mol})$.
Let $\rho_{*}:\mathbb{R}\rightarrow \mathbb{R}$ be the function, s.t., $\rho_{*}\left(\left|x\right|\right)=\rho\left(x\right),\ x\in \mathbb{R}^{n}$.
The multi-index $\alpha$ is written as $\alpha=\beta+\gamma$ for $\beta,\gamma\in \mathbb{N}_{0}^{n}$ with $\left|\gamma\right|=1$.
Then, the inequality $(\ref{frankel1})$ holds for
\begin{align}
P_{\alpha}=\displaystyle \int_{\mathbb{R}^{n}}\frac{\left|\frac{\partial^{\beta}}{\partial x^{\beta}}\rho_{1}(y)\right|\left(1+\left|y\right|\right)^{\left|\beta\right|}}{\left(1-\left|y\right|\right)}dy,\label{fra/Palpha}
\end{align}
where $\rho_{1}(y):=\left(n-1\right)\rho_{*}\left(\left|y\right|\right)+\left|y\right|\rho_{*}'\left(\left|y\right|\right)$.
\end{rem}
By applying the above lemma to the distance function, the regularized distance for any closed set can be derived:
\begin{dif}[Regularized distance]\label{RD}
Let $S$ be a closed set in $\mathbb{R}^{n}$.
For given any $\xi\in(0,1)$, there exists a function 
$\mathrm{RD}_{S,\xi} \in C^{\infty}\left(S^{c}\right)$
such that, for all $x\in S^{c}$,
\begin{align}
&\left(1+\xi\right)^{-2}\mathrm{dist}\left(x,S\right)\leq \mathrm{RD}_{S,\xi}\left(x\right)\leq\left(1-\xi\right)^{-2}\mathrm{dist}\left(x,S\right)
\shortintertext{and}
&\left|\frac{\partial^{\alpha}}{\partial x^{\alpha}}\mathrm{RD}_{S,\xi}\left(x\right)\right|\leq P_{\alpha}\left(\xi\, \mathrm{dist}\left(x,S\right)\right)^{1-\left|\alpha\right|},~~\forall\alpha\in \mathbb{N}_{0}^{n}\ \mathrm{with}\ \left|\alpha\right|\geq 1.\label{Palpha}
\end{align}
The function $\mathrm{RD}_{S,\xi}$ is called regularized distance from $S$.
\end{dif}

Next, we introduce two types of open sets:
\begin{dif}[Special Lipschitz domain \cite{stein1970book}]\label{SpLip}
Let $\phi:\mathbb{R}^{n-1}\rightarrow \mathbb{R}$~$(n=2,3,\cdots)$ be a function satisfying the Lipschitz condition, i.e.,
\begin{align*}
\left|\phi\left(x\right)-\phi\left(y\right)\right|\leq M\left|x-y\right|,\ \forall x,y\in \mathbb{R}^{n-1}
\end{align*}
holds for some $M>0$.
Then, $\Omega$ is called a special Lipschitz domain if it is written as $\Omega:=\{\left(x',x_{n}\right)\in \mathbb{R}^{n}\ :\ x_{n}>\phi\left(x'\right)\}$ with $x'=(x_{1},x_{2},\cdots,x_{n-1})\in \mathbb{R}^{n-1}$.
\end{dif}
The positive number $M$ in Definition \ref{SpLip} is called the Lipschitz constant of $\Omega$.
Generalizing the special Lipschitz domain, the domain with minimally smooth boundary is defined as follows:
\begin{dif}[Domain with minimally smooth boundary \cite{stein1970book}]\label{def/Lipschitz}
An open set $\Omega\subset \mathbb{R}^{n}$~$(n=2,3,\cdots)$ is said to be a domain with minimally smooth boundary if there exist $\varepsilon>0,\ N\in \mathbb{N},\ M>0,$ and a sequence $\left\{U_{i}\right\}_{i\in \mathbb{N}}$ of open
subsets of $\mathbb{R}^{n}$ such that
\begin{enumerate}
\item for any $x\in\partial\Omega, B\left(x,\varepsilon\right)\subset U_{i}$ holds for some $i\in \mathbb{N}${\rm ;}
\item no point in $\mathbb{R}^{n}$ belongs to more than $N$ of the $U_{i}${\rm ;}
\item for any $i\in \mathbb{N}$, there exists a special Lipschitz domain $\Omega_{i}$, whose Lipschitz bound is not more than $M$, such that $U_{i}\cap\Omega=U_{i}\cap\Omega_{i}$.
\end{enumerate}
\end{dif}
The positive number $M$ in Definition \ref{def/Lipschitz} is called the Lipschitz constant of $\Omega$, and $N$ in Definition \ref{def/Lipschitz} is called the overlap number of $\Omega$.
To avoid confusion, $M$ and $N$ are sometimes denoted by $M_{\Omega}$ and $N_{\Omega}$, respectively.

\section{Construction of extension operator}\label{Stein/construction}
Here, we describe Stein's construction method for extension operators \cite{stein1970book}.
Stein first constructed an extension operator on a special Lipschitz domain.
He then expanded this to that on a domain with minimally smooth boundary.
\subsection{Extension operator on special Lipschitz domain}\label{subsec/sp}
Let $\Omega'\subset \mathbb{R}^{n}$~$(n=2,3,\cdots)$ be a special Lipschitz domain; namely,
$\Omega'$ is written as the form $\Omega':=\{\left(x',x_{n}\right)\in \mathbb{R}^{n}$~$:$~$x_{n}>\phi\left(x'\right)\},\ x'=(x_{1},x_{2},\cdots,x_{n-1})\in \mathbb{R}^{n-1}$ with a Lipschitz continuous function $\phi:\mathbb{R}^{n-1}\rightarrow \mathbb{R}$ whose Lipschitz constant is $M_{\Omega'}$.
For given $\xi>0$, let $\mathrm{RD}_{\Omega',\xi}$ be the regularized distance with the bound $P_{\alpha}$ as in (\ref{Palpha}).
Moreover, for given $\tau>0$, let us define $g_{\Omega',\tau,\xi}^{*}:=\left(1+\tau\right)C_{\Omega',\xi}\mathrm{RD}_{\Omega',\xi}$ with $C_{\Omega',\xi}:=\left(1+\xi\right)^{2}\sqrt{1+M_{\Omega'}^{2}}$.
Then, for any $k\in N_{0}$ and any $p\in[1,\infty)$, the operator $E_{\Omega',\tau,\xi}$ defined by
\begin{align}
&(E_{\Omega',\tau,\xi}u)\left(x',x_{n}\right)\nonumber\\
:=&\left\{\begin{array}{l l}
u\left(x',x_{n}\right) ,&\forall\left(x',x_{n}\right)\in\overline{\Omega'},\\
\displaystyle \int_{1}^{\infty}u\left(x',x_{n}+tg_{\Omega',\tau,\xi}^{*}\left(x',x_{n}\right)\right)\psi\left(t\right)dt ,&\forall\left(x',x_{n}\right)\in\left(\overline{\Omega'}\right)^{c}
\end{array}\right.\label{extension/sp}
\end{align}
becomes extension operator from $W^{k,p}\left(\Omega'\right)$ to $W^{k,p}\left(\mathbb{R}^{n}\right)$,
where $\psi:\mathbb{R}\rightarrow \mathbb{R}$ is a function satisfying the following property 
\begin{align}
\displaystyle \int_{1}^{\infty}\psi\left(t\right)dt=1,~~\displaystyle \int_{1}^{\infty}t^{m}\psi\left(t\right)dt=0,~~\forall m\in \mathbb{N}.\label{phi/prop}
\end{align}

Note that, since $\left(1+M_{\Omega'}^{2}\right)^{-1/2}\mathrm{dist}\left(x,\left(\overline{\Omega'}\right)^{c}\right)\geq\phi\left(x'\right)-x_{n}$ for all $\left(x',x_{n}\right)\in\left(\overline{\Omega'}\right)^{c}$, we have $g_{\Omega',\tau,\xi}^{*}\left(x',x_{n}\right)\geq(1+\tau)(\phi\left(x'\right)-x_{n})$.
\begin{rem}
The extension operator on a special Lipschitz domain presented here is a little general one.
That is to say, Stein set $\tau$ and $\xi$ in concrete values because he focused on just proving the existence of the extension operators in his original theory {\rm \cite{stein1970book}}.
However, the selections of $\tau$ and $\xi$ are important for accuracy of the corresponding embedding constant.
\end{rem}
\subsection{Extension operator on domain with minimally smooth boundary}
Let $\Omega$ be a domain with minimally smooth boundary.
Let $\left\{U_{i}\right\}_{i\in \mathbb{N}}$ be the sequence as in Definition \ref{def/Lipschitz}.
Let $\varepsilon$ be a positive number satisfying that $U_{i}^{\frac{3}{4}\varepsilon}$ are not empty for all $i\in \mathbb{N}$
and if $\mathrm{dist}\left(x,\partial\Omega\right)\leq\varepsilon/2$ then $x\in U_{i}^{\frac{1}{2}\varepsilon}$ holds for some $i\in \mathbb{N}$.
Let $\rho$ be a mollifier, and put $\rho_{\varepsilon}\left(x\right):=\varepsilon^{-n}\rho\left(x\varepsilon^{-1}\right)$.
Let $\chi_{i}$ be the characteristic function of $U_{i}^{\frac{3}{4}\varepsilon}$, and put $\lambda_{i}^{\varepsilon}\left(x\right):=(\chi_{i}*\rho_{\frac{1}{4}\varepsilon})\left(x\right)$.
Put
\begin{align*}
U_{0}&=\left\{x\in \mathbb{R}^{n}~:\ \mathrm{d}\mathrm{i}\mathrm{s}\mathrm{t}\left(x,\Omega\right)<\frac{1}{4}\varepsilon\right\},\\
U_{+}&=\left\{x\in \mathbb{R}^{n}~:\ \mathrm{d}\mathrm{i}\mathrm{s}\mathrm{t}\left(x,\partial\Omega\right)<\frac{3}{4}\varepsilon\right\},
\shortintertext{and}
U_{-}&=\left\{x\in\Omega~:\ \mathrm{d}\mathrm{i}\mathrm{s}\mathrm{t}\left(x,\partial\Omega\right)>\frac{1}{4}\varepsilon\right\}.
\end{align*}
Let $\chi_{0},\ \chi_{+}$, and $\chi_{-}$ be the corresponding characteristic functions of $U_{0},\ U_{+}$, and $U_{-}$, respectively.
Let $\lambda_{0}^{\varepsilon}:=\chi_{0}*\rho_{\frac{1}{4}\varepsilon},\ \lambda_{+}^{\varepsilon}:=\chi_{+}*\rho_{\frac{1}{4}\varepsilon}$, and $\lambda_{-}^{\varepsilon}:=\chi_{-}*\rho_{\frac{1}{4}\varepsilon}$.
Put
\begin{align*}
\displaystyle \Lambda_{+}^{\varepsilon}:=\lambda_{0}^{\varepsilon}\frac{\lambda_{+}^{\varepsilon}}{\lambda_{+}^{\varepsilon}+\lambda_{-}^{\varepsilon}}~~{\rm and}~~\displaystyle \Lambda_{-}^{\varepsilon}:=\lambda_{0}^{\varepsilon}\frac{\lambda_{-}^{\varepsilon}}{\lambda_{+}^{\varepsilon}+\lambda_{-}^{\varepsilon}}.
\end{align*}
To each $U_{i}$ there corresponds a special Lipschitz domain $\Omega_{i}$ as in Definition \ref{def/Lipschitz}.
Let $E_{\Omega_{i},\tau,\xi}^{i}$ be the extension operator for each $\Omega_{i}$ constructed by (\ref{extension/sp}).
For any $k\in N_{0}$ and any $p\in[1,\infty)$, the following operator $E_{\Omega,\tau,\xi,\varepsilon}$ becomes extension operator from $W^{k,p}\left(\Omega\right)$ to $W^{k,p}\left(\mathbb{R}^{n}\right)$:
\begin{align}
\left(E_{\Omega,\tau,\xi,\varepsilon}u\right)\left(x\right):=\Lambda_{+}^{\varepsilon}\left(x\right)\left(\frac{\displaystyle\sum_{i=1}^{\infty}\lambda_{i}^{\varepsilon}\left(x\right)E_{\Omega_{i},\tau,\xi}^{i}\left(\lambda_{i}^{\varepsilon}u\right)\left(x\right)}{\displaystyle\sum_{i=1}^{\infty}\lambda_{i}^{\varepsilon}\left(x\right)^{2}}\right)+\Lambda_{-}^{\varepsilon}\left(x\right)u\left(x\right)\label{gene/extension/siki}
\end{align}
for all $x\in \mathbb{R}^{n}.$

Here, one can observe that
\begin{itemize}
\setlength{\parskip}{0mm}
\setlength{\itemsep}{0mm}
\item$\mathrm{supp}$\,$\lambda_{i}^{\varepsilon}\subset U_{i}$, and $\lambda_{i}^{\varepsilon}\left(x\right)=1$ if $x\in U_{i}^{\frac{1}{2}\varepsilon}$;
\item if $x\in \mathrm{supp}\,\Lambda_{+}^{\varepsilon}$, then $\displaystyle \sum_{i\in \mathbb{N}}\lambda_{i}^{\varepsilon}\left(x\right)\geq 1$;
\item bounds of the derivatives of $\lambda_{i}^{\varepsilon}$ are independent on $i$ but depend only on the $L^{1}$ norm of the corresponding derivatives of $\rho_{\frac{1}{4}\varepsilon}$;
\item$\lambda_{0}^{\varepsilon}\left(x\right)=1$ if $x\in\overline{\Omega}$;
\item$\lambda_{+}^{\varepsilon}\left(x\right)=1$ if $\mathrm{dist}\left(x,\partial\Omega\right)\leq\varepsilon/2$;
\item$\lambda_{-}^{\varepsilon}\left(x\right)=1$ if $ x\in\Omega$ and $\mathrm{dist}\left(x,\partial\Omega\right)\geq\varepsilon/2$;
\item the supports of $\lambda_{0}^{\varepsilon},\ \lambda_{+}^{\varepsilon}$, and $\lambda_{-}^{\varepsilon}$ are contained in the $\varepsilon/2$-neighborhood of $\Omega$, in the $\varepsilon$-neighborhood of $\partial\Omega$, and in $\Omega$, respectively;
\item the functions $\lambda_{0}^{\varepsilon},\ \lambda_{+}^{\varepsilon}$, and $\lambda_{-}^{\varepsilon}$ are bounded in $\mathbb{R}^{n}$, and all their partial derivatives are also bounded;
\item all the derivatives of $\Lambda_{+}^{\varepsilon}$ and $\Lambda_{-}^{\varepsilon}$ are bounded on $\mathbb{R}^{n}$;
\item$\Lambda_{+}^{\varepsilon}+\Lambda_{-}^{\varepsilon}$ is $1$ on $\overline{\Omega}$ and is $0$ outside the $\varepsilon/2$-neighborhood of $\Omega$.
\end{itemize}

\begin{rem}
In Stein's original method {\rm \cite{stein1970book}}, the assumption for $\varepsilon>0$ is just to be small enough.
However, since bounds for the derivatives of $\lambda_{i}^{\varepsilon}$ increase with decreasing $\varepsilon$,
a small $\varepsilon$ makes the corresponding extension constant large.
Due to this, we should select the value of $\varepsilon$ with taking this property in consideration.
The selections of $\varepsilon$ for concrete domains $\Omega$ can be seen in Subsection \ref{example}.
\end{rem}

\section{Formula for estimating operator norm}\label{main/theo/sec}
Let us first present the following lemma, which gives bounds for the operator norm of the extension operator on special Lipschitz domains constructed by the method in Subsection \ref{subsec/sp}.
\begin{lem}\label{sp/lemma}
For a special Lipschitz domain $\Omega'\subset \mathbb{R}^{n}\ (n=2,3,\cdots)$,
let $E\,(=E_{\Omega',\tau,\xi})$ be the extension operator constructed by $(\ref{extension/sp})$.
Then,
\begin{align}
\left\|Eu\right\|_{L^{p}\left(\mathbb{R}^{n}\right)}&\leq A_{p,\tau,\xi}\left(\Omega'\right)\left\|u\right\|_{L^{p}\left(\Omega'\right)},~~\forall u\in H^{1}(\Omega')\label{splip1}
\shortintertext{and}
\left\|\nabla\left(Eu\right)\right\|_{L^{p}\left(\mathbb{R}^{n}\right)}&\leq A_{p,\tau,\xi}'\left(\Omega'\right)\left\|\nabla u\right\|_{L^{p}\left(\Omega'\right)},~~\forall u\in H^{1}(\Omega')\label{splip2}
\end{align}
hold for
\begin{align*}
&A_{p,\tau,\xi}\left(\Omega'\right)=\left\{\left(A_{0}Q\right)^{p}+1\right\}^{1/p}
\shortintertext{and}
&A_{p,\tau,\xi}'\left(\Omega'\right)\\
=&\displaystyle \max\left\{2^{p-1}\left(A_{0}Q\right)^{p}+1,\left[\left(n-1\right)2^{p-1}\left(BQ\right)^{p}+\left\{\left(A_{0}+B\right)Q\right\}^{p}+1\right]^{1/p}\right\},
\end{align*}
respectively.
Here:
\begin{itemize}
\renewcommand{\labelitemi}{\normalfont\bfseries \textendash}
\setlength{\parskip}{0mm}
\setlength{\itemsep}{0mm}
\item $A_{0}$ and $A_{1}$ are constants satisfying $\left|\psi\left(t\right)\right|\leq A_{0}/t^{2}~(t\geq 1)$ and $\left|\psi\left(t\right)\right|\leq A_{1}/t^{3}~(t\geq 1)$, respectively;
\item $P$ is corresponding to $P_{\alpha}$ with $|\alpha|=1$.
\item $Q$ and $B$ are defined as
\begin{align*}
&Q~(=Q_{\Omega',\tau,\xi,p}):=\displaystyle \frac{p\left(1+\tau\right)\left(1+\xi\right)^{2}}{(p+1)\tau^{1+1/p}\left(1-\xi\right)^{2}}\sqrt{1+M_{\Omega'}^{2}}
\shortintertext{and}
&B~(=B_{\Omega',\xi,\tau}):=A_{1}P\left(1+\xi\right)^{2}\left(1+\tau\right)\sqrt{1+M_{\Omega'}^{2}}.
\end{align*}
\end{itemize}
\end{lem}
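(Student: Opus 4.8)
The plan is to estimate separately the $L^p$ norm and the $L^p$ norm of the gradient of $Eu$ on the complement $(\overline{\Omega'})^c$, since on $\overline{\Omega'}$ the extension coincides with $u$ and contributes exactly $\|u\|_{L^p(\Omega')}$ (resp. $\|\nabla u\|_{L^p(\Omega')}$), which accounts for the ``$+1$'' appearing inside the brackets. For the $L^p$ bound \eqref{splip1}, I would start from the defining integral $(Eu)(x',x_n)=\int_1^\infty u(x',x_n+t g^*(x',x_n))\psi(t)\,dt$, apply the bound $|\psi(t)|\le A_0/t^2$ and Minkowski's integral inequality, and then for each fixed $t$ perform the change of variables $y_n = x_n + t g^*(x',x_n)$ in the $x_n$-integral over $\{x_n<\phi(x')\}$. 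The key point is to control the Jacobian $\partial y_n/\partial x_n = 1 + t\,\partial_{x_n} g^*$ from below: since $g^* = (1+\tau)C_{\Omega',\xi}\mathrm{RD}_{\Omega',\xi}$ and by \eqref{Palpha} with $|\alpha|=1$ we have $|\partial_{x_n}\mathrm{RD}_{\Omega',\xi}|\le P_\alpha/\xi$, one obtains a uniform positive lower bound on the Jacobian, and the resulting factor, after integrating $t^{-2}\cdot(\text{const})$ against $dt$ and collecting the constants $C_{\Omega',\xi}$, $\xi$, $\tau$, $p$, assembles into $(A_0 Q)^p$. Raising to the $1/p$ and adding the interior contribution gives $A_{p,\tau,\xi}(\Omega')$.

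For the gradient bound \eqref{splip2}, I would differentiate the integral representation of $Eu$ on $(\overline{\Omega'})^c$. Writing $v(x',x_n) = x_n + t g^*(x',x_n)$, the chain rule gives
\begin{align*}
\partial_{x_n}(Eu)(x',x_n) &= \int_1^\infty (\partial_{x_n} u)(x',v)\,\bigl(1 + t\,\partial_{x_n} g^*\bigr)\psi(t)\,dt,\\
\partial_{x_i}(Eu)(x',x_n) &= \int_1^\infty \Bigl[(\partial_{x_i} u)(x',v) + (\partial_{x_n} u)(x',v)\, t\,\partial_{x_i} g^*\Bigr]\psi(t)\,dt,\quad 1\le i\le n-1.
\end{align*}
The term $\int_1^\infty (\partial_{x_n}u)(x',v)\,\psi(t)\,dt$ is handled exactly as in the $L^p$ estimate, producing an $A_0 Q$ factor. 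The remaining terms carry a factor $t\,\partial g^*$; since $\partial g^*$ is bounded by a constant times $P/\xi$ and is \emph{not} integrated against $t^{-2}$ but against $t\cdot t^{-3} = t^{-2}$ after using the sharper bound $|\psi(t)|\le A_1/t^3$, these assemble into the constant $B = A_1 P(1+\xi)^2(1+\tau)\sqrt{1+M_{\Omega'}^2}$. One then splits: for the $\partial_{x_n}$ component the two contributions $A_0 Q$ (from the ``$1$'' in $1+t\partial_{x_n}g^*$) and $B Q$ (from the $t\partial_{x_n}g^*$ part) combine — after the elementary inequality $(a+b)^p\le 2^{p-1}(a^p+b^p)$ — into $2^{p-1}(A_0Q)^p$; and for the $n-1$ tangential components one gets $(n-1)2^{p-1}(BQ)^p$ from the $\partial_{x_i}u$ cross-terms together with $\{(A_0+B)Q\}^p$, again via $2^{p-1}$-type splitting. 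Taking the maximum over the two cases (and adding the interior ``$+1$'') yields the stated $A_{p,\tau,\xi}'(\Omega')$.

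The main obstacle is the careful bookkeeping of the change-of-variables step on $(\overline{\Omega'})^c$: one must verify that $x_n\mapsto x_n + t g^*(x',x_n)$ is a bijection of $\{x_n<\phi(x')\}$ onto a subset of $\{y_n>\phi(x')\}\subset$ (roughly) $\Omega'$, so that the integral of $|\nabla u|^p$ or $|u|^p$ over the target region is genuinely bounded by the norm over $\Omega'$, and that the factor $t$ appearing in $g^*$ does not destroy integrability in $t$ — this is precisely why the hypotheses $|\psi(t)|\le A_0/t^2$ and $|\psi(t)|\le A_1/t^3$ are imposed, and why the extra power of $t$ in the gradient cross-terms is exactly compensated. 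The observation recorded just before the lemma, namely $g^*(x',x_n)\ge (1+\tau)(\phi(x')-x_n)$ for $(x',x_n)\in(\overline{\Omega'})^c$, is what guarantees that $v(x',x_n)\ge \phi(x') + \tau(\phi(x')-x_n) > \phi(x')$ for $t\ge 1$, placing the evaluation point inside $\Omega'$; making this quantitative enough to control the Jacobian lower bound uniformly in $(x',x_n)$ is the technical heart of the argument. Once these estimates are in place, everything else is the routine collection of constants into $Q$ and $B$.
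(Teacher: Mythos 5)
Your broad outline (isolate the exterior contribution, differentiate the integral representation, compare the $A_0/t^2$ and $A_1/t^3$ bounds on $\psi$, collect constants into $Q$ and $B$) agrees with the paper, but the central technical step is different and, as stated, does not work.

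The paper does \emph{not} change variables in $x_n$ for fixed $t$. It changes variables in $t$ for fixed $(x',x_n)$, setting $s=z+t\,g^{*}(x',x_n)$ with $z=x_n-\phi(x')$, whose Jacobian is simply $g^{*}(x',x_n)>0$. This turns the inner $t$-integral into $g^{*}\int_{z+g^{*}}^{\infty}|u(x',s+\phi(x'))|(s-z)^{-2}\,ds$, and the two-sided pinching $(1+\tau)|z|\le g^{*}(x',x_n)\le a|z|$ then reduces the bound to a one-dimensional Hardy-type integral. After the further substitution $w=\tau z$ one applies Hardy's inequality (Lemma~\ref{hardy}); this is precisely what produces the factor $\frac{p}{(p+1)\tau^{1+1/p}}$ in $Q$. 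Your proposal replaces this with Minkowski's integral inequality plus a change of variables $x_n\mapsto y_n=x_n+t\,g^{*}(x',x_n)$ for fixed $t$, with a ``uniform positive lower bound on the Jacobian $1+t\,\partial_{x_n}g^{*}$.'' That lower bound does not exist: $\partial_{x_n}g^{*}$ is negative (the regularized distance to $\overline{\Omega'}$ decreases as $x_n$ increases) and is only bounded in magnitude by $B/A_1=(1+\tau)(1+\xi)^2\sqrt{1+M_{\Omega'}^{2}}\,P$, a number typically much larger than $1$; hence $1+t\,\partial_{x_n}g^{*}$ changes sign and vanishes on a set of positive measure as $t$ ranges over $[1,\infty)$, so the Jacobian factor $1/|1+t\,\partial_{x_n}g^{*}|$ is not integrable and the pushforward bound collapses. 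Moreover, even granting such a bound, the scheme ``Minkowski, then Jacobian, then $\int_1^\infty t^{-2}\,dt$'' produces no $\tau^{1+1/p}$ or $p/(p+1)$ dependence, so it cannot recover $Q$; Hardy's inequality is essential and is absent from your argument.

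The gradient bookkeeping is also slightly off: the paper bounds the full $\partial_{x_n}$ term directly by $(A_0+B)\int_1^\infty|u_{x_n}|\,t^{-2}dt$, giving the single contribution $\{(A_0+B)Q\}^p+1$ for the normal derivative, while the $2^{p-1}$-splitting is used only for the $n-1$ tangential components, where $\partial_{x_j}(Eu)$ genuinely mixes $u_{x_j}$ and $u_{x_n}$. Your account attributes $2^{p-1}(A_0Q)^p+1$ to the normal component and mixes the remaining terms differently; the final formula happens to coincide with the paper's, but the derivation you sketch does not produce it. I recommend redoing the exterior estimate with the $t\mapsto s$ substitution and Hardy's inequality as in the paper.
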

\begin{proof}
Since $C^{\infty}(\Omega')$ is dense in $H^{1}(\Omega')$, it suffices to consider $u\in C^{\infty}(\Omega')$.
Moreover, $\Omega'$ is written as the form $\Omega':=\{\left(x',x_{n}\right)\in \mathbb{R}^{n}$~$:$~$x_{n}>\phi\left(x'\right)\},\ x'=(x_{1},x_{2},\cdots,x_{n-1})\in \mathbb{R}^{n-1}$ with a Lipschitz continuous function $\phi:\mathbb{R}^{n-1}\rightarrow \mathbb{R}$ whose Lipschitz constant is $M_{\Omega'}$.
Hereafter, we write $u_{y}=\partial_{y}u,\ g^{*}=g_{\Omega',\tau,\xi}^{*},\ g_{y}^{*}=\partial_{y}g^{*},$ for simplicity.\\
\\
{\it The first step: estimating $A_{p,\tau,\xi}\left(\Omega'\right)$}\\[5pt]
If $y<\phi\left(x\right)$ with $y\in \mathbb{R}$ and $x=(x_{1},x_{2},\cdots,x_{n-1})\in \mathbb{R}^{n-1}$,
\begin{align}
\left|\left(Eu\right)\left(x,y\right)\right|&=\left|\int_{1}^{\infty}u\left(x,y+tg^{*}\left(x,y\right)\right)\psi\left(t\right)dt\right|\nonumber\\
&\displaystyle \leq A_{0}\int_{1}^{\infty}\left|u\left(x,y+tg^{*}\left(x,y\right)\right)\right|\frac{dt}{t^{2}}.\label{eq/a01}
\end{align}
Setting $z:=y-\phi\left(x\right)$, we have $g^{*}\left(x,y\right)\geq\left(1+\tau\right)\left(\phi\left(x\right)-y\right)=\left(1+\tau\right)\left|z\right|$.
We also have $\phi\left(x\right)-y\geq \mathrm{dist}\left(\left(x,y\right),\ \overline{\Omega'}\right)$ for all $x\in \mathbb{R}^{n-1}$ and $y\in \mathbb{R}$.
Since $\mathrm{dist}\left(\left(x,y\right),\ \overline{\Omega'}\right)\geq\left(1-\xi\right)^{2}\mathrm{RD}_{\Omega',\xi}\left(x_{,}y\right)$ holds, it follows that
\begin{align}
\left|z\right|=\phi\left(x\right)-y&\geq \mathrm{dist}\left(\left(x,y\right),\ \overline{\Omega'}\right)\nonumber\\
&\geq\left(1-\xi\right)^{2}\mathrm{RD}_{\Omega',\xi}\left(x,y\right)\nonumber\\
&=\left(1-\xi\right)^{2}\left(1+\tau\right)^{-1}C_{\Omega',\xi}^{-1}g^{*}\left(x,y\right).\label{omoidasi}
\end{align}
Now, recall that $g^{*}=\left(1+\tau\right)C_{\Omega',\xi}\mathrm{RD}_{\Omega',\xi}$.
From (\ref{omoidasi}), we obtain $g^{*}\left(x,y\right)\leq a\left|z\right|$, where $a~(=a_{\Omega',\tau,\xi}):=(1+\tau)(1+\xi)^{2}(1-\xi)^{-2}\sqrt{1+M_{\Omega'}^{2}}$.
Putting $s=z+tg^{*}\left(x,y\right)$, it follows from (\ref{eq/a01}) that
\begin{align*}
\left|\left(Eu\right)\left(x,y\right)\right|&\displaystyle \leq A_{0}\int_{1}^{\infty}\left|u\left(x,y+tg^{*}\left(x,y\right)\right)\right|\frac{dt}{t^{2}}\\
&=A_{0}g^{*}\displaystyle \left(x,y\right)\int_{z+g^{*}\left(x,y\right)}^{\infty}\left|u\left(x,s+\phi(x)\right)\right|\left(s-z\right)^{-2}ds\\
&\displaystyle \leq A_{0}a\left|z\right|\int_{\tau\left|z\right|}^{\infty}\left|u\left(x,s+\phi(x)\right)\right|\left(s-z\right)^{-2}ds\\
&\displaystyle \leq A_{0}a\left|z\right|\int_{\tau\left|z\right|}^{\infty}\left|u\left(x,s+\phi(x)\right)\right|s^{-2}ds.
\end{align*}
By changing the variable integration as $\left(\tau(y-\phi\left(x\right))=\right)\tau z=w$, we have
\begin{align*}
&\displaystyle \int_{-\infty}^{\phi(x)}\left|\left(Eu\right)\left(x,y\right)\right|^{p}dy\nonumber\\
\leq&\displaystyle \left(\frac{aA_{0}}{\tau}\right)^{p}\int_{-\infty}^{\phi(x)}\left(\tau\left|z\right|\int_{\tau\left|z\right|}^{\infty}\left|u\left(x,s+\phi(x)\right)\right|s^{-2}ds\right)^{p}dy,\ z=y-\phi\left(x\right)\nonumber\\
=&\displaystyle \left(\frac{aA_{0}}{\tau^{1+1/p}}\right)^{p}\int_{-\infty}^{0}\left(\left|w\right|\int_{\left|w\right|}^{\infty}\left|u\left(x,s+\phi(x)\right)\right|s^{-2}ds\right)^{p}dw\nonumber\\
=&\displaystyle \left(\frac{aA_{0}}{\tau^{1+1/p}}\right)^{p}\int_{0}^{\infty}\left(\int_{\left|w\right|}^{\infty}\left|u\left(x,s+\phi(x)\right)\right|s^{-2}ds\right)^{p}\left|w\right|^{(p+1)-1}dw.
\end{align*}
Hardy's inequality, which can be found in Lemma \ref{hardy}, gives
\begin{align}
\displaystyle \int_{-\infty}^{\phi(x)}\left|\left(Eu\right)\left(x,y\right)\right|^{p}dy&\displaystyle \leq\left(\frac{pA_{0}a}{(p+1)\tau^{1+1/p}}\right)^{p}\int_{0}^{\infty}\left(\left|u\left(x,s+\phi(x)\right)\right|s^{-1}\right)^{p}s^{p}ds\nonumber\\
&=\displaystyle \left(\frac{pA_{0}a}{(p+1)\tau^{1+1/p}}\right)^{p}\int_{0}^{\infty}\left|u\left(x,s+\phi(x)\right)\right|^{p}ds\nonumber\\
&=\displaystyle \left(\frac{pA_{0}a}{(p+1)\tau^{1+1/p}}\right)^{p}\int_{\phi(x)}^{\infty}\left|u\left(x,y\right)\right|^{p}dy.\label{tasumae}
\end{align}
Moreover, from the definition (\ref{extension/sp}) of the extension operator, we have
\begin{align}
\displaystyle \int_{\phi\left(x\right)}^{\infty}\left|\left(Eu\right)\left(x,y\right)\right|^{p}dy=\int_{\phi\left(x\right)}^{\infty}\left|u\left(x,y\right)\right|^{p}dy.\label{tasumono}
\end{align}
From (\ref{tasumae}) and (\ref{tasumono}), it follows that
\begin{align}
&\left(\int_{-\infty}^{\infty}\left|\left(Eu\right)\left(x,y\right)\right|^{p}dy\right)^{1/p}\nonumber\\
\leq&\left\{\left(A_{0}Q\right)^{p}+1\right\}^{1/p}\left(\int_{\phi\left(x\right)}^{\infty}\left|u\left(x,y\right)\right|^{p}dy\right)^{1/p},\label{pre/ap1}
\end{align}
where $Q~(=Q_{\Omega',\tau,\xi,p}):=pa_{\Omega',\tau,\xi}/\left\{(p+1)\tau^{1+1/p}\right\}$.
Integrating the both side of (\ref{pre/ap1}) by $x$, we find that (\ref{splip1}) holds for
\begin{align*}
A_{p,\tau,\xi}\left(\Omega'\right)=\left\{\left(A_{0}Q\right)^{p}+1\right\}^{1/p}.
\end{align*}
\\
{\it The second step: estimating $A_{p,\tau,\xi}^{'}\left(\Omega'\right)$}\\[5pt]
The inequality (\ref{Palpha}) ensures that $|g_{x_{j}}^{*}\left(x,y\right)|\leq B/A_{1}$ for $j\in\{1,2,\cdots,n\}$.
If $y<\phi\left(x\right)$ with $y\in \mathbb{R}$ and $x=(x_{1},x_{2},\cdots,x_{n-1})\in \mathbb{R}^{n-1}$,
\begin{align}
\partial_{y}\left(Eu\right)\left(x,y\right)&=\displaystyle \partial_{y}\int_{1}^{\infty}u\left(x,y+tg^{*}\ \left(x,y\right)\right)\psi\left(t\right)dt\nonumber\\
&=\displaystyle \int_{1}^{\infty}u_{y}\left(x,y+tg^{*}\ \left(x,y\right)\right)\left(1+tg_{y}^{*}\left(x,y\right)\right)\psi\left(t\right)dt\nonumber\\
&=\displaystyle \int_{1}^{\infty}u_{y}\left(x,y+tg^{*}\ \left(x,y\right)\right)\psi\left(t\right)dt\nonumber\\
&~~~~~~~~~~~~~~~+g_{y}^{*}\displaystyle \left(x,y\right)\int_{1}^{\infty}u_{y}\left(x,y+tg^{*}\left(x,y\right)\right)t\psi\left(t\right)dt.\nonumber
\end{align}
Therefore, we have
\begin{align}
&\left|\partial_{y}\left(Eu\right)\left(x,y\right)\right|\nonumber\\
\leq&\left|\int_{1}^{\infty}u_{y}\left(x,y+tg^{*}\left(x,y\right)\right)\psi\left(t\right)dt\right|\nonumber\\
&~~~~~~~~~~~~~~+\left|g_{y}^{*}\left(x,y\right)\right|\left|\int_{1}^{\infty}u_{y}\left(x,y+tg^{*}\left(x,y\right)\right)t^{3}\psi\left(t\right)\frac{dt}{t^{2}}\right|\nonumber\\
\leq&\displaystyle \left(A_{0}+B\right)\int_{1}^{\infty}\left|u_{y}\left(x,y+tg^{*}\left(x,y\right)\right)\right|\frac{dt}{t^{2}},~~y<\phi\left(x\right).\nonumber
\end{align}
From the similar discussion in the first step, we have
\begin{align}
\displaystyle \int_{-\infty}^{\infty}\left|\partial_{y}\left(Eu\right)\left(x,y\right)\right|^{p}dy\leq\left[\left\{\left(A_{0}+B\right)Q\right\}^{p}+1\right]\int_{\phi(x)}^{\infty}\left|u_{y}\left(x,y\right)\right|^{p}dy.\label{Adash/pre1}
\end{align}
On the other hand, for $j\in\left\{1,2,\cdots,n-1\right\}$ and $y<\phi\left(x\right)$,
\begin{align*}
&\partial_{x_{j}}\left(Eu\right)\left(x,y\right)\nonumber\\
=&~\displaystyle \partial_{x_{j}}\int_{1}^{\infty}u\left(x,y+tg^{*}\left(x,y\right)\right)\psi\left(t\right)dt\nonumber\\
=&\displaystyle \int_{1}^{\infty}\left\{u_{x_{j}}\left(x,y+tg^{*}\left(x,y\right)\right)+u_{y}\left(x,y+tg^{*}\left(x,y\right)\right)tg_{x_{j}}^{*}\left(x,y\right)\right\}\psi\left(t\right)dt\nonumber\\
=&\displaystyle \int_{1}^{\infty}u_{x_{j}}\left(x,y+tg^{*}\left(x,y\right)\right)\psi\left(t\right)dt\nonumber\\
&~~~~~~~~~~~~~~~~~~~+g_{x_{j}}^{*}\displaystyle \left(x,y\right)\int_{1}^{\infty}u_{y}\left(x,y+tg^{*}\left(x,y\right)\right)t\psi\left(t\right)dt.
\end{align*}
Therefore, we have
\begin{align}
&\left|\partial_{x_{j}}\left(Eu\right)\left(x,y\right)\right|\nonumber\\
\leq&\left|\int_{1}^{\infty}u_{x_{j}}\left(x,y+tg^{*}\left(x,y\right)\right)\psi\left(t\right)dt\right|\nonumber\\
&~~~~~~~~~~~~~~+\left|g_{x_{j}}^{*}\left(x,y\right)\right|\left|\int_{1}^{\infty}u_{y}\left(x,y+tg^{*}\left(x,y\right)\right)t\psi\left(t\right)dt\right|\nonumber\\
\leq&A_{0}\displaystyle \int_{1}^{\infty}\left|u_{x_{j}}\left(x,y+tg^{*}\left(x,y\right)\right)\right|\frac{dt}{t^{2}}\nonumber\\
&~~~~~~~~~~~~~~+B\displaystyle \int_{1}^{\infty}\left|u_{y}\left(x,y+tg^{*}\left(x,y\right)\right)\right|\frac{dt}{t^{2}},~~y<\phi\left(x\right)\nonumber.
\end{align}
Since $\left(s+t\right)^{p}\leq 2^{p-1}\left(s^{p}+t^{p}\right)$ holds for $s,t>0$ and $p>1$, it follows from the similar discussion in (\ref{tasumae}) that
\begin{align*}
&\displaystyle \int_{-\infty}^{\phi(x)}\left|\partial_{x_{j}}\left(Eu\right)\left(x,y\right)\right|^{p}dy\\
\leq&2^{p-1}\displaystyle \int_{-\infty}^{\phi(x)}\left|A_{0}a\left|z\right|\int_{\tau\left|z\right|}^{\infty}\left|u_{x_{j}}\left(x,s+\phi(x)\right)\right|s^{-2}ds\right|^{p}dy\\
&~~~~~~~~~~~~~~~+2^{p-1}\displaystyle \int_{-\infty}^{\phi(x)}\left|Ba\left|z\right|\int_{\tau\left|z\right|}^{\infty}\left|u_{y}\left(x,s+\phi(x)\right)\right|s^{-2}ds\right|^{p}dy\\
\leq&2^{p-1}\displaystyle \left(A_{0}Q\right)^{p}\int_{\phi(x)}^{\infty}\left|u_{x_{j}}\left(x,y\right)\right|^{p}dy+2^{p-1}\left(BQ\right)^{p}\int_{\phi(x)}^{\infty}\left|u_{y}\left(x,y\right)\right|^{p}dy.
\end{align*}
Therefore,
\begin{align}
&\displaystyle \int_{-\infty}^{\infty}\left|\partial_{x_{j}}\left(Eu\right)\left(x,y\right)\right|^{p}dy
\leq\displaystyle \left\{2^{p-1}\left(A_{0}Q\right)^{p}+1\right\}\int_{\phi(x)}^{\infty}\left|u_{x_{j}}\left(x,y\right)\right|^{p}dy\nonumber\\
&~~~~~~~~~~~~~~~~~~~~~~~~~~~~~~~~~~~~~~~~~~~~+2^{p-1}\left(BQ\right)^{p}\int_{\phi(x)}^{\infty}\left|u_{y}\left(x,y\right)\right|^{p}dy\label{Adash/pre2}
\end{align}
for $j\in\left\{1,2,\cdots,n-1\right\}$.
From (\ref{Adash/pre1}) and (\ref{Adash/pre2}), we have
\begin{align*}
&\displaystyle \sum_{j=1}^{n}\int_{-\infty}^{\infty}\left|\partial_{x_{j}}\left(Eu\right)\left(x,y\right)\right|^{p}dy\\
=&\displaystyle \sum_{j=1}^{n-1}\int_{-\infty}^{\infty}\left|\partial_{x_{j}}\left(Eu\right)\left(x,y\right)\right|^{p}dy+\int_{-\infty}^{\infty}\left|\partial_{y}\left(Eu\right)\left(x,y\right)\right|^{p}dy\\
\leq&\displaystyle \left\{2^{p-1}\left(A_{0}Q\right)^{p}+1\right\}\sum_{j=1}^{n-1}\int_{\phi(x)}^{\infty}\left|u_{x_{j}}\left(x,y\right)\right|^{p}dy\\
&~~~~~~~~~~~~~~~+\displaystyle \left(n-1\right)2^{p-1}\left(BQ\right)^{p}\int_{\phi(x)}^{\infty}\left|u_{y}\left(x,y\right)\right|^{p}dy\\
&~~~~~~~~~~~~~~~+\displaystyle \left[\left\{\left(A_{0}+B\right)Q\right\}^{p}+1\right]\int_{\phi(x)}^{\infty}\left|u_{y}\left(x,y\right)\right|^{p}dy\\
=&\displaystyle \left\{2^{p-1}\left(A_{0}Q\right)^{p}+1\right\}\sum_{j=1}^{n-1}\int_{\phi(x)}^{\infty}\left|u_{x_{j}}\left(x,y\right)\right|^{p}dy\\
&~~~~~~~~+\displaystyle \left[\left(n-1\right)2^{p-1}\left(BQ\right)^{p}+\left\{\left(A_{0}+B\right)Q\right\}^{p}+1\right]\int_{\phi(x)}^{\infty}\left|u_{y}\left(x,y\right)\right|^{p}dy.
\end{align*}
This ensures that the inequality (\ref{splip2}) holds for
\begin{align*}
&A_{p,\tau,\xi}'\left(\Omega'\right)\nonumber\\
=&\displaystyle \max\left\{2^{p-1}\left(A_{0}Q\right)^{p}+1,\left[\left(n-1\right)2^{p-1}\left(BQ\right)^{p}+\left\{\left(A_{0}+B\right)Q\right\}^{p}+1\right]^{1/p}\right\}.
\end{align*}
\end{proof}
The following formula enable us to estimate the operator norm $A_{q}\left(\Omega\right)$ for the extension operator on domains with minimally smooth boundary constructed by the method in Section \ref{Stein/construction}.
\begin{theo}\label{main}
For a domain $\Omega\subset \mathbb{R}^{n}$ $(n=2,3,\cdots)$ with minimally smooth boundary,
let $E\,(=E_{\Omega,\tau,\xi,\varepsilon})$ be the extension operator constructed by $(\ref{gene/extension/siki})$.
Then, letting $\gamma$ be a given positive number,
\begin{align}
\left\|\nabla\left(Eu\right)\right\|_{L^{p}\left(\mathbb{R}^{n}\right)}\leq A_{p}\left(\Omega\right)\left(\left\|\nabla u\right\|_{L^{p}\left(\Omega\right)}+\gamma\left\|u\right\|_{L^{p}\left(\Omega\right)}\right),~~\forall u\in W^{1,p}(\Omega)\label{ap/condision}
\end{align}
holds for
\begin{align}
A_{p}\left(\Omega\right)=\left\{\begin{array}{l}
NA'+1,~~R\leq\gamma,\\
b_{\varepsilon}\left(6NA+NA'+3\right)n^{1/p}/\gamma,~~R>\gamma,
\end{array}\right.\label{main/ap}
\end{align}
where $N$ is the overlap number of $\Omega$,
$b_{\varepsilon}$ is a positive number satisfying $b_{\varepsilon}\geq\int_{\mathbb{R}^{n}}|\partial_{x_{j}}\rho_{\frac{1}{4}\varepsilon}\left(x\right)|dx$ for all $j\in\{1,2,\cdots,n\}$,
and $R:=b_{\varepsilon}(6NA+NA'+3)n^{1/p}/(NA'+1)$.
The constants $A$ and $A'$ are determined by $A=\sup\{A_{p,\tau,\xi}\left(\Omega_{i}\right) : i\in\mathbb{N}\}$ and $A'=\sup\{A'_{p,\tau,\xi}\left(\Omega_{i}\right) : i\in\mathbb{N}\}$ for the operator norms $A_{p,\tau,\xi}\left(\Omega_{i}\right)$ and $A'_{p,\tau,\xi}\left(\Omega_{i}\right)$ of $E^{i}\,(=E_{\Omega_{i},\tau,\xi}^{i})$ satisfying $(\ref{splip1})$ and $(\ref{splip2})$ with the notational replacement $\Omega'=\Omega_{i}$, respectively.
\end{theo}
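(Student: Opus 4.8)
The plan is to reduce to smooth $u$, decompose $\mathbb{R}^{n}$ into $\Omega$ and its exterior, and on the exterior differentiate the explicit formula (\ref{gene/extension/siki}) term by term, controlling each piece by the estimates of Lemma \ref{sp/lemma} and by uniform bounds on the cutoff functions. First I would invoke the Meyers--Serrin theorem, so that it suffices to prove (\ref{ap/condision}) for $u\in C^{\infty}(\Omega)\cap W^{1,p}(\Omega)$. A direct computation with the properties of the cutoffs listed after (\ref{gene/extension/siki}) shows that $(Eu)(x)=u(x)$ for $x\in\Omega$: if $x\in\overline{\Omega}\cap U_{i}$ then $x\in U_{i}\cap\Omega=U_{i}\cap\Omega_{i}\subset\overline{\Omega_{i}}$, so $E_{\Omega_{i},\tau,\xi}^{i}(\lambda_{i}^{\varepsilon}u)(x)=\lambda_{i}^{\varepsilon}(x)u(x)$, whence the bracketed quotient in (\ref{gene/extension/siki}) equals $u(x)$, and since $\Lambda_{+}^{\varepsilon}+\Lambda_{-}^{\varepsilon}=\lambda_{0}^{\varepsilon}=1$ on $\overline{\Omega}$ one gets $Eu=u$ there; hence $\nabla(Eu)=\nabla u$ a.e.\ on $\Omega$. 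Outside $\overline{\Omega}$ the term $\Lambda_{-}^{\varepsilon}u$ vanishes and $\Lambda_{+}^{\varepsilon}$ is supported in $W:=\{x\notin\overline{\Omega}:\ \mathrm{dist}(x,\Omega)\le\varepsilon/2\}$; for $x\in W$ one has $\mathrm{dist}(x,\partial\Omega)\le\varepsilon/2$, so by the covering hypothesis $x\in U_{i}^{\frac{1}{2}\varepsilon}$ for some $i$, hence $\lambda_{i}^{\varepsilon}(x)=1$, giving $\sum_{i}(\lambda_{i}^{\varepsilon}(x))^{2}\ge 1$ on $W$. Therefore $\|\nabla(Eu)\|_{L^{p}(\mathbb{R}^{n})}^{p}=\|\nabla u\|_{L^{p}(\Omega)}^{p}+\|\nabla(Eu)\|_{L^{p}(W)}^{p}$, and the whole problem reduces to bounding the second term on a region where the denominator $\sum_{i}(\lambda_{i}^{\varepsilon})^{2}$ is $\ge 1$.

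On $W$ I would differentiate $Eu=\Lambda_{+}^{\varepsilon}\,(\sum_{i}\lambda_{i}^{\varepsilon}E^{i}(\lambda_{i}^{\varepsilon}u))/(\sum_{i}(\lambda_{i}^{\varepsilon})^{2})$ by the product and quotient rules, writing $\partial_{x_{j}}(Eu)$ as a sum of four kinds of terms: (i) $(\partial_{x_{j}}\Lambda_{+}^{\varepsilon})$ times the quotient; (ii) $\Lambda_{+}^{\varepsilon}$ times $(\sum_{i}(\partial_{x_{j}}\lambda_{i}^{\varepsilon})E^{i}(\lambda_{i}^{\varepsilon}u))/(\sum_{i}(\lambda_{i}^{\varepsilon})^{2})$; (iii) $\Lambda_{+}^{\varepsilon}$ times $(\sum_{i}\lambda_{i}^{\varepsilon}\partial_{x_{j}}E^{i}(\lambda_{i}^{\varepsilon}u))/(\sum_{i}(\lambda_{i}^{\varepsilon})^{2})$; and (iv) the quotient-rule remainder, $-\Lambda_{+}^{\varepsilon}$ times the quotient times $(2\sum_{i}\lambda_{i}^{\varepsilon}\partial_{x_{j}}\lambda_{i}^{\varepsilon})/(\sum_{i}(\lambda_{i}^{\varepsilon})^{2})$. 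In (i), (ii) and (iv) only the undifferentiated $E^{i}(\lambda_{i}^{\varepsilon}u)$ occurs, which is controlled in $L^{p}$ by (\ref{splip1}), $\|E^{i}(\lambda_{i}^{\varepsilon}u)\|_{L^{p}(\mathbb{R}^{n})}\le A\|\lambda_{i}^{\varepsilon}u\|_{L^{p}(\Omega_{i})}\le A\|u\|_{L^{p}(U_{i}\cap\Omega)}$; in (iii) one uses (\ref{splip2}), $\|\nabla(E^{i}(\lambda_{i}^{\varepsilon}u))\|_{L^{p}(\mathbb{R}^{n})}\le A'\|\nabla(\lambda_{i}^{\varepsilon}u)\|_{L^{p}(\Omega_{i})}$, with $\nabla(\lambda_{i}^{\varepsilon}u)=\lambda_{i}^{\varepsilon}\nabla u+u\nabla\lambda_{i}^{\varepsilon}$. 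The remaining inputs are $0\le\lambda_{i}^{\varepsilon},\Lambda_{\pm}^{\varepsilon}\le 1$; the bounds $|\partial_{x_{j}}\lambda_{i}^{\varepsilon}|\le b_{\varepsilon}$ (from $\partial_{x_{j}}\lambda_{i}^{\varepsilon}=\chi_{i}*\partial_{x_{j}}\rho_{\frac{1}{4}\varepsilon}$) and the corresponding bounds for $\partial_{x_{j}}\Lambda_{\pm}^{\varepsilon}$, obtained from the quotient rule on $\Lambda_{+}^{\varepsilon}=\lambda_{0}^{\varepsilon}\lambda_{+}^{\varepsilon}/(\lambda_{+}^{\varepsilon}+\lambda_{-}^{\varepsilon})$ using $\lambda_{+}^{\varepsilon}+\lambda_{-}^{\varepsilon}\ge 1$ on $\mathrm{supp}\,\Lambda_{+}^{\varepsilon}$; the overlap property (at most $N$ of the $\lambda_{i}^{\varepsilon}$ nonzero at any point), which gives $\sum_{i}\|f\|_{L^{p}(U_{i}\cap\Omega)}^{p}\le N\|f\|_{L^{p}(\Omega)}^{p}$ and, via Hölder for at most $N$-term sums, converts the pointwise sums above into $\ell^{p}$-sums; and $\sum_{i}(\lambda_{i}^{\varepsilon})^{2}\ge 1$ on $W$, which kills the denominators. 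Summing the $j$-wise estimates over $j=1,\dots,n$ produces the factor $n^{1/p}$.

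Collecting the contributions, I expect to obtain
\begin{align*}
\|\nabla(Eu)\|_{L^{p}(W)}\le NA'\,\|\nabla u\|_{L^{p}(\Omega)}+(6NA+NA'+3)\,n^{1/p}b_{\varepsilon}\,\|u\|_{L^{p}(\Omega)},
\end{align*}
and then, using $a^{p}+b^{p}\le(a+b)^{p}$ for $a,b\ge 0$ and $p\ge 1$,
\begin{align*}
\|\nabla(Eu)\|_{L^{p}(\mathbb{R}^{n})}\le(NA'+1)\,\|\nabla u\|_{L^{p}(\Omega)}+(6NA+NA'+3)\,n^{1/p}b_{\varepsilon}\,\|u\|_{L^{p}(\Omega)}.
\end{align*}
Writing $C_{1}:=NA'+1$ and $C_{2}:=(6NA+NA'+3)n^{1/p}b_{\varepsilon}$, so that $R=C_{2}/C_{1}$, the elementary observations that $C_{1}s+C_{2}t\le C_{1}(s+\gamma t)$ when $C_{2}\le\gamma C_{1}$ (i.e.\ $R\le\gamma$), and $C_{1}s+C_{2}t=(C_{2}/\gamma)((\gamma C_{1}/C_{2})s+\gamma t)\le(C_{2}/\gamma)(s+\gamma t)$ when $C_{2}>\gamma C_{1}$ (since then $\gamma C_{1}/C_{2}<1$), valid for all $s,t\ge 0$, turn the last inequality into (\ref{ap/condision}) with $A_{p}(\Omega)$ exactly as in (\ref{main/ap}).

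The routine parts are the density reduction, the verification that $Eu=u$ on $\Omega$, and the final rearrangement. The main obstacle is the middle step: carrying out the product- and quotient-rule expansion and bounding each of the resulting terms while (a) exploiting $\sum_{i}(\lambda_{i}^{\varepsilon})^{2}\ge 1$ on $W$ so that no negative power of the overlap number $N$ is produced, (b) using Cauchy--Schwarz and the $N$-term inequalities carefully---most delicately for the quotient-rule remainder term (iv)---so that the dependence on $N$ stays linear, and (c) tracking the numerical coefficients so that they combine to precisely $6NA+NA'+3$. The same scheme applied to $Eu$ itself rather than to $\nabla(Eu)$ yields the companion $L^{p}$-bound needed later for the embedding constant, but it is not required for this theorem.
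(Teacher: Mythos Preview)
Your approach is correct and closely parallels the paper's, with one structural difference worth noting. You split $\mathbb{R}^{n}$ into $\Omega$ (where $Eu=u$, so that piece contributes exactly $\|\nabla u\|_{L^{p}(\Omega)}$) and the exterior set $W$, whereas the paper works globally on $\mathbb{R}^{n}$, applying the triangle inequality directly to the full expansion of $\partial_{x_{j}}(Eu)$ and therefore keeping the $\Lambda_{-}^{\varepsilon}$-terms $(\partial_{x_{j}}\Lambda_{-}^{\varepsilon})u$ and $\Lambda_{-}^{\varepsilon}\partial_{x_{j}}u$ explicitly. In the paper these two terms produce $3b_{\varepsilon}n^{1/p}\|u\|_{L^{p}(\Omega)}$ (from $|\partial_{x_{j}}\Lambda_{-}^{\varepsilon}|\le 3b_{\varepsilon}$) and $\|\nabla u\|_{L^{p}(\Omega)}$ respectively, which is exactly where the ``$+3$'' and the ``$+1$'' in the final constants come from. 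In your decomposition the $\Lambda_{-}^{\varepsilon}$-terms never appear on $W$, so a careful execution of your scheme actually gives
\[
\|\nabla(Eu)\|_{L^{p}(W)}\le NA'\|\nabla u\|_{L^{p}(\Omega)}+(6NA+NA')\,n^{1/p}b_{\varepsilon}\|u\|_{L^{p}(\Omega)},
\]
i.e.\ a coefficient slightly \emph{smaller} than the theorem's; the ``$+3$'' you wrote for the $W$-estimate is an overcount (harmless, since the theorem only asserts an upper bound, but worth correcting if you want the sharp output of your own argument). Apart from this, the product/quotient-rule expansion, the use of Lemma~\ref{sp/lemma} and the overlap estimate of Lemma~\ref{fact}, the bound $|\partial_{x_{j}}\Lambda_{+}^{\varepsilon}|\le 3b_{\varepsilon}$, and the final rearrangement via the dichotomy $R\lessgtr\gamma$ are exactly as in the paper.
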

\begin{proof}
For any $j\in\{1,2,\cdots,n\}$, we have
\begin{align}
\displaystyle \left|\partial_{x_{j}}\lambda_{i}^{\varepsilon}\right|\leq\int_{\mathbb{R}^{n}}\left|\partial_{x_{j}}\rho_{\frac{1}{4}\varepsilon}\left(x\right)\right|dx\leq b_{\varepsilon};\label{bepsilon}
\end{align}
this bound does not depend on the index $i$.
Likewise, $\left|\partial_{x_{j}}\lambda_{0}^{\varepsilon}\right|,\ \left|\partial_{x_{j}}\lambda_{+}^{\varepsilon}\right|$, and $\left|\partial_{x_{j}}\lambda_{-}^{\varepsilon}\right|$ are bounded by $b_{\varepsilon}$.
Moreover,
\begin{align*}
\left|\partial_{x_{j}}\Lambda_{+}^{\varepsilon}\right|&=\left|\left(\partial_{x_{j}}\lambda_{0}^{\varepsilon}\right)\frac{\lambda_{+}^{\varepsilon}}{\lambda_{+}^{\varepsilon}+\lambda_{-}^{\varepsilon}}+\lambda_{0}^{\varepsilon}\frac{\left(\partial_{x_{j}}\lambda_{+}^{\varepsilon}\right)\left(\lambda_{+}^{\varepsilon}+\lambda_{-}^{\varepsilon}\right)-\lambda_{+}^{\varepsilon}\left(\partial_{x_{j}}\lambda_{+}^{\varepsilon}+\partial_{x_{j}}\lambda_{-}^{\varepsilon}\right)}{\left(\lambda_{+}^{\varepsilon}+\lambda_{-}^{\varepsilon}\right)^{2}}\right|\nonumber\\
&\leq 3b_{\varepsilon}=:b_{+}.
\end{align*}
It is easily confirmed that $\left|\partial_{x_{j}}\Lambda_{-}^{\varepsilon}\right|$ is also bounded by $3b_{\varepsilon}=:b_{-}$ (we distinguish $b_{+}$ and $b_{-}$ to avoid confusion in the following proof).
Hereafter, we simply denote
$\displaystyle \bigcup_{i}U_{i}^{\varepsilon/2}$ by $U^{*}$,
$\displaystyle \sum_{i\in \mathbb{N}}$ by $\displaystyle \sum$,
$\lambda_{i}^{\varepsilon}$ by $\lambda_{i}$,
$\Lambda_{+}^{\varepsilon}$ by $\Lambda_{+}$,
and $\Lambda_{-}^{\varepsilon}$ by $\Lambda_{-}$.
For $u\in H^{1}(\Omega)$,
\begin{align}
&\left\|\nabla\left(Eu\right)\right\|_{L^{p}\left(\mathbb{R}^{n}\right)}\nonumber\\
=&\left(\sum_{j}\int_{\mathbb{R}^{n}}\left|\partial_{x_{j}}\left(Eu\right)\right|^{p}dx\right)^{1/p}\nonumber\\
\leq&\left(\sum_{j}\int_{\mathbb{R}^{n}}\left|\left(\partial_{x_{j}}\Lambda_{+}\right)\left(\frac{\sum\lambda_{i}E^{i}\left(\lambda_{i}u\right)}{\sum\lambda_{i}^{2}}\right)\right|^{p}dx\right)^{1/p}\nonumber\\
&+\left(\sum_{j}\int_{\mathbb{R}^{n}}\left|\Lambda_{+}\left(\circ\right)\right|^{p}dx\right)^{1/p}\nonumber\\
&+\left(\sum_{j}\int_{\mathbb{R}^{n}}\left|\left(\partial_{x_{j}}\Lambda_{-}\right)u\right|^{p}dx\right)^{1/p}+\left(\sum_{j}\int_{\mathbb{R}^{n}}\left|\Lambda_{-}\left(\partial_{x_{j}}u\right)\right|^{p}dx\right)^{1/p},\label{extension/estimate}
\end{align}
where 
\begin{align*}
\displaystyle \circ:=\frac{\left(\partial_{x_{j}}\sum\lambda_{i}E^{i}\left(\lambda_{i}u\right)\right)\left(\sum\lambda_{i}^{2}\right)-\left(\sum\lambda_{i}E^{i}\left(\lambda_{i}u\right)\right)\left(\partial_{x_{j}}\sum\lambda_{i}^{2}\right)}{\left(\sum\lambda_{i}^{2}\right)^{2}}.
\end{align*}
From Lemma \ref{fact}, the first term of (\ref{extension/estimate}) is evaluated as
\begin{align*}
&\left(\sum_{j}\int_{\mathbb{R}^{n}}\left|\left(\partial_{x_{j}}\Lambda_{+}\right)\left(\frac{\sum\lambda_{i}E^{i}\left(\lambda_{i}u\right)}{\sum\lambda_{i}^{2}}\right)\right|^{p}dx\right)^{1/p}\nonumber\\
\leq&b_{+}\left(\sum_{j}\int_{U^{*}}\left|\frac{\sum\lambda_{i}E^{i}\left(\lambda_{i}u\right)}{\sum\lambda_{i}^{2}}\right|^{p}dx\right)^{1/p}\nonumber\\
\leq&b_{+}n^{1/p}\left(\int_{U^{*}}\left|\sum\lambda_{i}E^{i}\left(\lambda_{i}u\right)\right|^{p}dx\right)^{1/p}\nonumber\\
\leq&b_{+}N^{1-1/p}n^{1/p}\left(\sum\int_{U_{i}}\left|E^{i}(\lambda_{i}u)\right|^{p}dx\right)^{1/p}\nonumber\\
\leq&b_{+}N^{1-1/p}An^{1/p}\left(\sum\int_{\Omega}\left|\lambda_{i}u\right|^{p}dx\right)^{1/p}\nonumber\\
\leq&b_{+}NAn^{1/p}\left(\int_{\Omega}\left|u\right|^{p}dx\right)^{1/p}.
\end{align*}
The second term of (\ref{extension/estimate}) is evaluated as
\begin{align}
&\left(\sum_{j}\int_{\mathbb{R}^{n}}\left|\Lambda_{+}\left(\circ\right)\right|^{p}dx\right)^{1/p}\nonumber\\
\leq&\left(\sum_{j}\int_{U^{*}}\left|\frac{\partial_{x_{j}}\sum\lambda_{i}E^{i}\left(\lambda_{i}u\right)}{\sum\lambda_{i}^{2}}\right|^{p}dx\right)^{1/p}\nonumber\\
&~~~~~~~~~~~~~~~~+\left(\sum_{j}\int_{U^{*}}\left|\frac{\left(\sum\lambda_{i}E^{i}\left(\lambda_{i}u\right)\right)\left(\partial_{x_{j}}\sum\lambda_{i}^{2}\right)}{\left(\sum\lambda_{i}^{2}\right)^{2}}\right|^{p}dx\right)^{1/p}\label{secondterm}.
\end{align}
The first term of (\ref{secondterm}) is evaluated as
\begin{align}
&\left(\sum_{j}\int_{U^{*}}\left|\frac{\partial_{x_{j}}\sum\lambda_{i}E^{i}\left(\lambda_{i}u\right)}{\sum\lambda_{i}^{2}}\right|^{p}dx\right)^{1/p}\nonumber\\
=&\left(\sum_{j}\int_{U^{*}}\left|\frac{\sum\left(\partial_{x_{j}}\lambda_{i}\right)E^{i}\left(\lambda_{i}u\right)+\sum\lambda_{i}\left(\partial_{x_{j}}E^{i}\left(\lambda_{i}u\right)\right)}{\sum\lambda_{i}^{2}}\right|^{p}dx\right)^{1/p}\nonumber\\
\leq&\left(\sum_{j}\int_{U^{*}}\left|\sum\left(\partial_{x_{j}}\lambda_{i}\right)E^{i}\left(\lambda_{i}u\right)\right|^{p}dx\right)^{1/p}\nonumber\\
&~~~~~~~~~~~~~~~~~~~~~~~~~+\left(\sum_{j}\int_{U^{*}}\left|\sum\lambda_{i}\left(\partial_{x_{j}}E^{i}\left(\lambda_{i}u\right)\right)\right|^{p}dx\right)^{1/p}.\label{second-1}
\end{align}
The first term of (\ref{second-1}) is evaluated as
\begin{align*}
&\left(\sum_{j}\int_{U^{*}}\left|\sum_{i}\left(\partial_{x_{j}}\lambda_{i}\right)E^{i}\left(\lambda_{i}u\right)\right|^{p}dx\right)^{1/p}\nonumber\\
\leq&\left(\sum_{j}N^{p-1}\sum_{i}\int_{U^{*}}\left|\left(\partial_{x_{j}}\lambda_{i}\right)E^{i}\left(\lambda_{i}u\right)\right|^{p}dx\right)^{1/p}\nonumber\\
\leq&N^{1-1/p}\left(\sum_{j}\sum_{i}\int_{U_{i}}b_{\varepsilon}^{p}\left|E^{i}\left(\lambda_{i}u\right)\right|^{p}dx\right)^{1/p}\nonumber\\
\leq&b_{\varepsilon}N^{1-1/p}\left(\sum_{j}\sum_{i}\int_{U_{i}}\left|E^{i}\left(\lambda_{i}u\right)\right|^{p}dx\right)^{1/p}\nonumber\\
\leq&b_{\varepsilon}N^{1-1/p}n^{1/p}\left(\sum_{i}\int_{U_{i}}\left|E^{i}\left(\lambda_{i}u\right)\right|^{p}dx\right)^{1/p}\nonumber\\
\leq&b_{\varepsilon}N^{1-1/p}An^{1/p}\left(\sum_{i}\int_{\Omega}\left|\lambda_{i}u\right|^{p}dx\right)^{1/p}\nonumber\\
\leq&b_{\varepsilon}NAn^{1/p}\left(\int_{\Omega}\left|u\right|^{p}dx\right)^{1/p}.
\end{align*}
The second term of (\ref{second-1}) is evaluated as
\begin{align*}
&\left(\sum_{j}\int_{U^{*}}\left|\sum_{i}\lambda_{i}\left(\partial_{x_{j}}E^{i}\left(\lambda_{i}u\right)\right)\right|^{p}dx\right)^{1/p}\nonumber\\
\leq&\left(\sum_{j}N^{p-1}\sum_{i}\int_{U^{*}}\left|\lambda_{i}\left(\partial_{x_{j}}E^{i}\left(\lambda_{i}u\right)\right)\right|^{p}dx\right)^{1/p}\nonumber\\
\leq&N^{1-1/p}\left(\sum_{i}\sum_{j}\int_{U_{i}}\left|\partial_{x_{j}}E^{i}\left(\lambda_{i}u\right)\right|^{p}dx\right)^{1/p}\nonumber\\
\leq&N^{1-1/p}A'\left(\sum_{i}\sum_{j}\int_{U_{i}}\left|\partial_{x_{j}}\left(\lambda_{i}u\right)\right|^{p}dx\right)^{1/p}\nonumber\\
=&N^{1-1/p}A'\left(\sum_{i}\sum_{j}\int_{\Omega}\left|\left(\partial_{x_{j}}\lambda_{i}\right)u+\lambda_{i}\left(\partial_{x_{j}}u\right)\right|^{p}dx\right)^{1/p}\nonumber\\
\leq&N^{1-1/p}A'\left(\sum_{i}\int_{\Omega}\sum_{j}\left|\left(\partial_{x_{j}}\lambda_{i}\right)u\right|^{p}dx\right)^{1/p}\nonumber\\
&~~~~~~~~~~~~~~+N^{1-1/p}A'\left(\sum_{i}\int_{\Omega}\sum_{j}\left|\lambda_{i}\left(\partial_{x_{j}}u\right)\right|^{p}dx\right)^{1/p}\nonumber\\
\leq&NA'\left\{\left(\sum_{j}b_{\varepsilon}^{p}\int_{\Omega}\left|u\right|^{p}dx\right)^{1/p}+\left(\int_{\Omega}\sum_{j}\left|\partial_{x_{j}}u\right|^{p}dx\right)^{1/p}\right\}\nonumber\\
\leq&NA'\left\{b_{\varepsilon}n^{1/p}\left(\int_{\Omega}\left|u\right|^{p}dx\right)^{1/p}+\left(\int_{\Omega}\sum_{j}\left|\partial_{x_{j}}u\right|^{p}dx\right)^{1/p}\right\}.
\end{align*}
The second term of (\ref{secondterm}) is evaluated as
\begin{align*}
&\left(\sum_{j}\int_{U^{*}}\left|\frac{\left(\sum\lambda_{i}E^{i}\left(\lambda_{i}u\right)\right)\left(\partial_{x_{j}}\sum\lambda_{i}^{2}\right)}{\left(\sum\lambda_{i}^{2}\right)^{2}}\right|^{p}dx\right)^{1/p}\nonumber\\
\leq&\left(\sum_{j}\int_{U^{*}}\left|\frac{\left(\sum\lambda_{i}E^{i}\left(\lambda_{i}u\right)\right)\left(2\sum\frac{\partial_{x_{j}}\lambda_{i}}{\lambda_{i}}\lambda_{i}^{2}\right)}{\left(\sum\lambda_{i}^{2}\right)^{2}}\right|^{p}dx\right)^{1/p}\nonumber\\
\leq&\left(\sum_{j}\int_{U^{*}}\left|\frac{\left(\sum\lambda_{i}E^{i}\left(\lambda_{i}u\right)\right)\left(2b_{\varepsilon}\sum\lambda_{i}^{2}\right)}{\left(\sum\lambda_{i}^{2}\right)^{2}}\right|^{p}dx\right)^{1/p}\nonumber\\
\leq&2b_{\varepsilon}\left(\sum_{j}\int_{U^{*}}\left|\sum\lambda_{i}E^{i}\left(\lambda_{i}u\right)\right|^{p}dx\right)^{1/p}\nonumber\\
\leq&2b_{\varepsilon}N^{1-1/p}n^{1/p}\left(\sum\int_{U_{i}}\left|E^{i}(\lambda_{i}u)\right|^{p}dx\right)^{1/p}\nonumber\\
\leq&2b_{\varepsilon}N^{1-1/p}An^{1/p}\left(\sum\int_{\Omega}\left|\lambda_{i}u\right|^{p}dx\right)^{1/p}\nonumber\\
\leq&2b_{\varepsilon}NAn^{1/p}\left(\int_{\Omega}\left|u\right|^{p}dx\right)^{1/p}.
\end{align*}
From the above evaluations, we have
\begin{align*}
&\left\|\nabla\left(Eu\right)\right\|_{L^{p}\left(\mathbb{R}^{n}\right)}\nonumber\\
\leq&b_{+}NAn^{1/p}\left(\int_{\Omega}\left|u\right|^{p}dx\right)^{1/p}+Nb_{\varepsilon}An^{1/p}\left(\int_{\Omega}\left|u\right|^{p}dx\right)^{1/p}\nonumber\\
&+NA'\left\{b_{\varepsilon}n^{1/p}\left(\int_{\Omega}\left|u\right|^{p}dx\right)^{1/p}+\left(\int_{\Omega}\sum_{j}\left|\partial_{x_{j}}u\right|^{p}dx\right)^{1/p}\right\}\nonumber\\
&+2Nb_{\varepsilon}An^{1/p}\left(\int_{\Omega}\left|u\right|^{p}dx\right)^{1/p}+b_{-}n^{1/p}\left(\int_{\Omega}\left|u\right|^{p}dx\right)^{1/p}\nonumber\\
&+\left(\int_{\Omega}\sum_{j}\left|\partial_{x_{j}}u\right|^{p}dx\right)^{1/p}\nonumber\\
=&\left(NA'+1\right)\left(\int_{\Omega}\sum_{j}\left|\partial_{x_{j}}u\right|^{p}dx\right)^{1/p}\nonumber\\
&~~~~~~~~~~~+\left(b_{+}NA+b_{\varepsilon}NA+2b_{\varepsilon}NA+b_{\varepsilon}NA'+b_{-}\right)n^{1/p}\left(\int_{\Omega}\left|u\right|^{p}dx\right)^{1/p}\nonumber\\
=&\left(NA'+1\right)\left\|\nabla u\right\|_{L^{p}\left(\Omega\right)}+b_{\varepsilon}\left(6NA+NA'+3\right)n^{1/p}\left\|u\right\|_{L^{p}\left(\Omega\right)}.
\end{align*}
Hence, the inequality (\ref{ap/condision}) holds for
\begin{align*}
A_{p}\left(\Omega\right)=\left\{\begin{array}{l}
\left(NA'+1\right),~~R\leq\gamma,\\
b_{\varepsilon}\left(6NA+NA'+3\right)n^{1/p}/\gamma,~~R>\gamma,
\end{array}\right.
\end{align*}
where $R:=b_{\varepsilon}\left(6NA+NA'+3\right)n^{1/p}/\left(NA'+1\right)$.
\end{proof}

\begin{rem}
The value $A_{p}\left(\Omega\right)$ derived by Theorem \ref{main} monotonically decreases with decreasing $\xi \in (0,1)$.
Moreover, $A_{p}\left(\Omega\right)\rightarrow A_{p}\left(\Omega\right)|_{\xi=0}~(\xi\downarrow 0)$ holds.
Therefore, $ A_{p}\left(\Omega\right)|_{\xi=0}+\delta$ with any positive number $\delta$ becomes an upper bound of the operator norm, while the range of $\xi$ is $(0,1)$.
\end{rem}
The operator norm derived by Theorem \ref{main} leads bounds for the embedding constant as in the following corollary.
\begin{coro}\label{embedding/coro}
For given $n\in\{2,3\cdots\}$ and $p\in(n/(n-1),\infty)$,
let $T_{p}$ be a constant in the classical Sobolev inequality, i.e.,
$\left\|u\right\|_{L^{p}\left(\mathbb{R}^{n}\right)}\leq T_{p}\left\|\nabla u\right\|_{L^{q}\left(\mathbb{R}^{n}\right)}$
for all
$u\in W^{1,q}(\mathbb{R}^{n})$,
where $q=np/(n+p)$.
Moreover, let $\Omega\subset \mathbb{R}^{n}$ be a domain with minimally smooth boundary.
Then,
\begin{align}
\left\|u\right\|_{L^{p}(\Omega)}\leq C_{p}(\Omega)\left\|u\right\|_{W^{1,q}(\Omega)},\ \forall u\in W^{1,q}(\Omega)\label{cp/thoe2}
\end{align}
holds for
\begin{align*}
C_{p}\left(\Omega\right)=2^{\frac{q-1}{q}}T_{p}A_{q}\left(\Omega\right).
\end{align*}
Here, $\left\|\cdot\right\|_{W^{1,q}(\Omega)}$ denotes the $\sigma$-weighted $W^{1,q}$ norm $(\ref{sigmanorm})$ for given $\sigma>0$, and $A_{q}\left(\Omega\right)$ is the upper bound for the operator norm derived by Theorem \ref{main} with $\gamma=\sigma^{1/q}$.
\end{coro}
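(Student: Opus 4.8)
The plan is to chain three facts --- the defining property of Stein's extension operator, the classical Sobolev inequality on $\mathbb{R}^{n}$, and Theorem \ref{main} --- and then to trade the ``sum'' of norms produced by Theorem \ref{main} for the $\sigma$-weighted $W^{1,q}$ norm $(\ref{sigmanorm})$ by a single application of H\"older's inequality.

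First I would fix $u\in W^{1,q}(\Omega)$ and take $E=E_{\Omega,\tau,\xi,\varepsilon}$ to be the extension operator constructed in $(\ref{gene/extension/siki})$, which maps $W^{1,q}(\Omega)$ into $W^{1,q}(\mathbb{R}^{n})$ and satisfies $(Eu)(x)=u(x)$ for all $x\in\Omega$. Since $\Omega\subset\mathbb{R}^{n}$, this gives $\|u\|_{L^{p}(\Omega)}=\|Eu\|_{L^{p}(\Omega)}\le\|Eu\|_{L^{p}(\mathbb{R}^{n})}$. Next I would observe that for $p\in(n/(n-1),\infty)$ the exponent $q=np/(n+p)$ lies in $(1,n)$ and satisfies $p=nq/(n-q)$, i.e.\ $p$ is the Sobolev conjugate of $q$; hence the classical Sobolev inequality is applicable to $Eu\in W^{1,q}(\mathbb{R}^{n})$ and yields $\|Eu\|_{L^{p}(\mathbb{R}^{n})}\le T_{p}\|\nabla(Eu)\|_{L^{q}(\mathbb{R}^{n})}$. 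Finally, applying Theorem \ref{main} with $p$ replaced by $q$ and $\gamma=\sigma^{1/q}$ bounds $\|\nabla(Eu)\|_{L^{q}(\mathbb{R}^{n})}$ by $A_{q}(\Omega)\bigl(\|\nabla u\|_{L^{q}(\Omega)}+\sigma^{1/q}\|u\|_{L^{q}(\Omega)}\bigr)$, so that $\|u\|_{L^{p}(\Omega)}\le T_{p}A_{q}(\Omega)\bigl(\|\nabla u\|_{L^{q}(\Omega)}+\sigma^{1/q}\|u\|_{L^{q}(\Omega)}\bigr)$.

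It then remains to relate the right-hand side to $\|u\|_{W^{1,q}(\Omega)}$. I would use H\"older's inequality in the elementary form $a+b=a\cdot 1+b\cdot 1\le(a^{q}+b^{q})^{1/q}\,2^{(q-1)/q}$, valid for $a,b\ge 0$ and $q\ge 1$, with $a=\|\nabla u\|_{L^{q}(\Omega)}$ and $b=\sigma^{1/q}\|u\|_{L^{q}(\Omega)}$. Since $a^{q}+b^{q}=\|\nabla u\|_{L^{q}(\Omega)}^{q}+\sigma\|u\|_{L^{q}(\Omega)}^{q}=\|u\|_{W^{1,q}(\Omega)}^{q}$, this gives $\|\nabla u\|_{L^{q}(\Omega)}+\sigma^{1/q}\|u\|_{L^{q}(\Omega)}\le 2^{(q-1)/q}\|u\|_{W^{1,q}(\Omega)}$, and combining with the previous display produces $\|u\|_{L^{p}(\Omega)}\le 2^{(q-1)/q}T_{p}A_{q}(\Omega)\|u\|_{W^{1,q}(\Omega)}$, which is $(\ref{cp/thoe2})$ with $C_{p}(\Omega)=2^{(q-1)/q}T_{p}A_{q}(\Omega)$.

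No step here is genuinely difficult; the corollary is essentially a bookkeeping exercise built on Theorem \ref{main}. The only points that deserve attention are checking the exponent identities $q\in(1,n)$ and $p=nq/(n-q)$ so that the classical Sobolev inequality can be invoked in the stated form, and keeping the weight $\sigma$ consistent when moving between the plain and $\sigma$-weighted $W^{1,q}$ norms; the constant $2^{(q-1)/q}$ is exactly the loss incurred in that last step.
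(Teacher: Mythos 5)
Your proof is correct and follows exactly the same chain of inequalities as the paper's own proof: extend, apply the classical Sobolev inequality, apply Theorem \ref{main} with $\gamma=\sigma^{1/q}$, and finally absorb the sum $\|\nabla u\|_{L^q(\Omega)}+\sigma^{1/q}\|u\|_{L^q(\Omega)}$ into the $\sigma$-weighted norm at the cost of the factor $2^{(q-1)/q}$. You merely make explicit the H\"older step and the verification that $q=np/(n+p)\in(1,n)$ with $p=nq/(n-q)$, both of which the paper leaves implicit.
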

\begin{proof}
We have
\begin{align}
\left\|u\right\|_{L^{p}\left(\Omega\right)}&\leq\left\|Eu\right\|_{L^{p}\left(\mathbb{R}^{n}\right)}\nonumber\\
&\leq T_{p}\left\|\nabla Eu\right\|_{L^{q}\left(\mathbb{R}^{n}\right)}\nonumber\\
&\leq T_{p}A_{q}\left(\Omega\right)\left(\left\|\nabla u\right\|_{L^{q}\left(\Omega\right)}+\sigma^{1/q}\left\|u\right\|_{L^{q}\left(\Omega\right)}\right)\nonumber\\
&\leq 2^{\frac{q-1}{q}}T_{p}A_{q}\left(\Omega\right)\left\|u\right\|_{W^{1,q}(\Omega)}\label{proof/coro}
\end{align}
for all $u\in W^{1,q}(\Omega)$.
\end{proof}
\begin{rem}
The constant $C_{p}'\left(\Omega\right)$ such that $\left\|u\right\|_{L^{p}(\Omega)}\leq C_{p}'\left(\Omega\right)\left\|u\right\|_{H^{1}(\Omega)}$ for all $u\in H^{1}(\Omega)$ is also important especially for verified numerical computation method and compute-assisted proof for PDEs summarized in, e.g., {\rm \cite{nakao2001numerical, plum2001computer, plum2008, takayasu2013verified}}.
We can obtain a formula giving a concrete value of $C_{p}'\left(\Omega\right)$ with additional assumptions for $\Omega$ and $p$ (see Corollary \ref{coroap}).
\end{rem}

\section{Examples}\label{result}
In this section, we present some examples of estimating the embedding constant $C_{p}\left(\Omega\right)$ defined in (\ref{purpose}) using Theorem \ref{main} and Corollary \ref{embedding/coro}.
Through out this section, we set $\rho$ as the mollifier defined in (\ref{sp/mol}) and set $\sigma=1$.
\subsection{Calculation of the constants}
\begin{figure}[t]
\begin{minipage}{0.5\hsize}
\begin{center}
  \includegraphics[height=60mm]{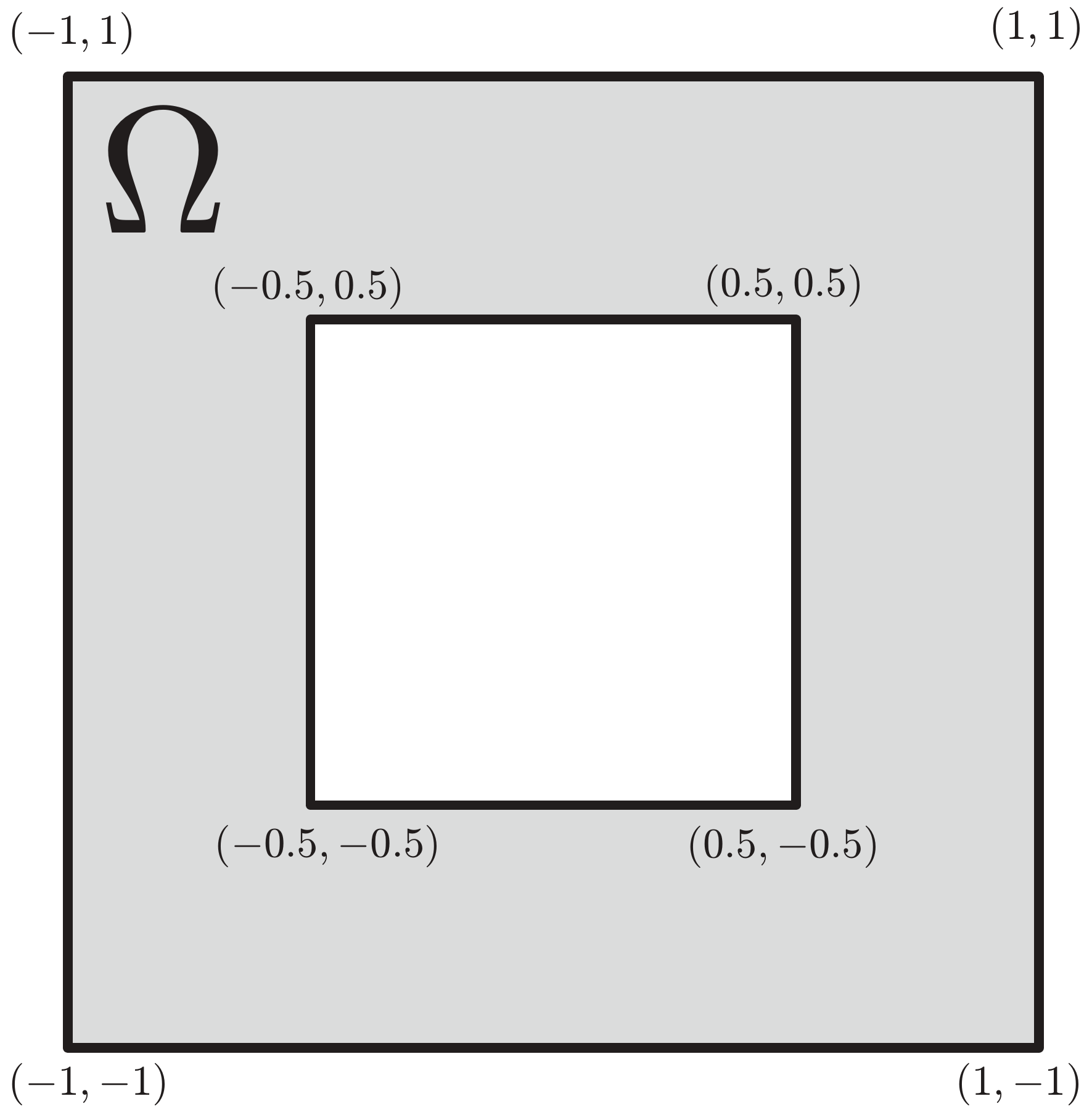}\\
{\Large (a)}
\end{center}
\end{minipage}
\begin{minipage}{0.5\hsize}
\begin{center}
  \includegraphics[height=60mm]{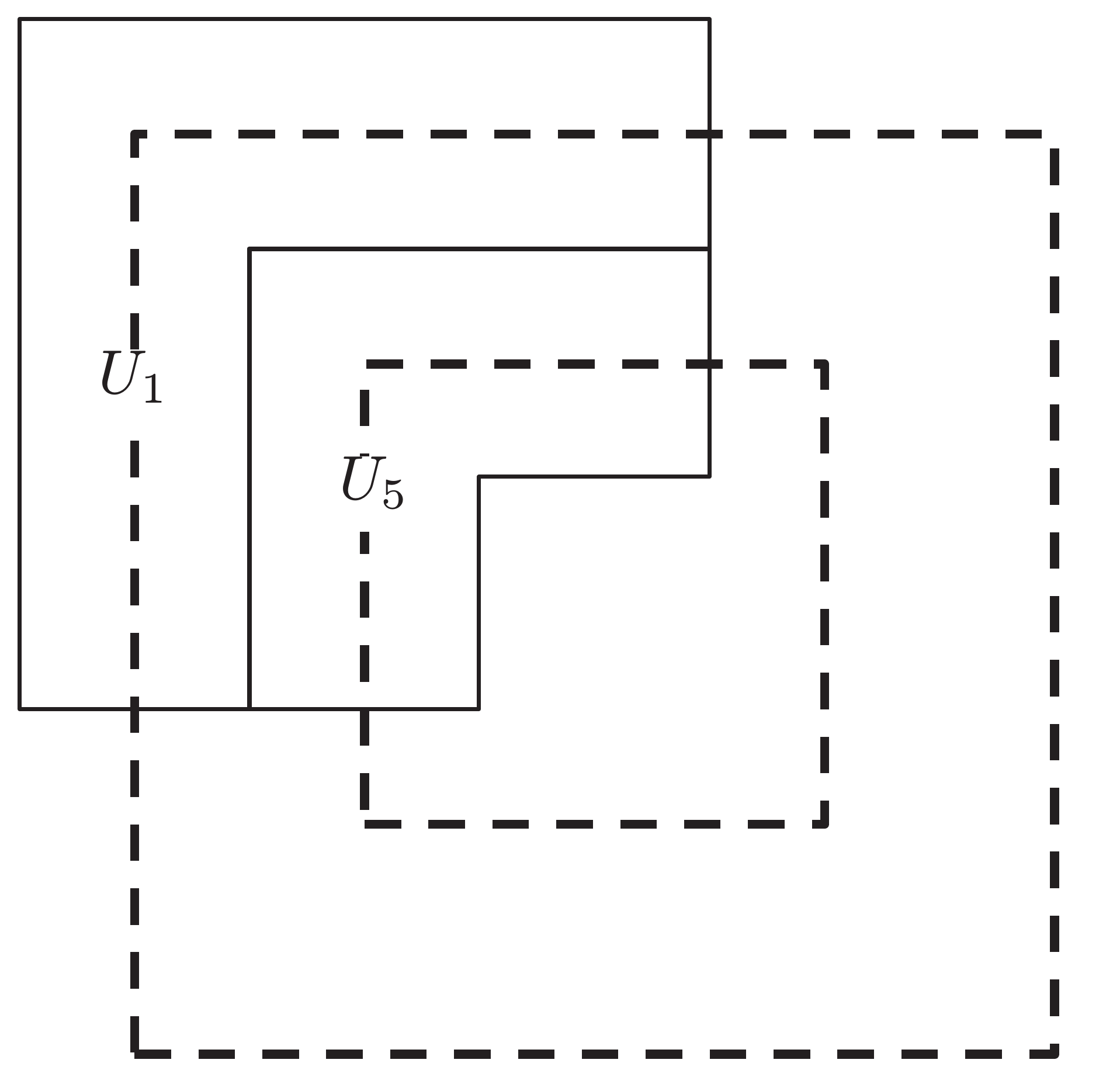}\\
~~{\Large (b)}
\end{center}
\end{minipage}\\
\begin{minipage}{\hsize}
\begin{center}
  \includegraphics[height=60mm]{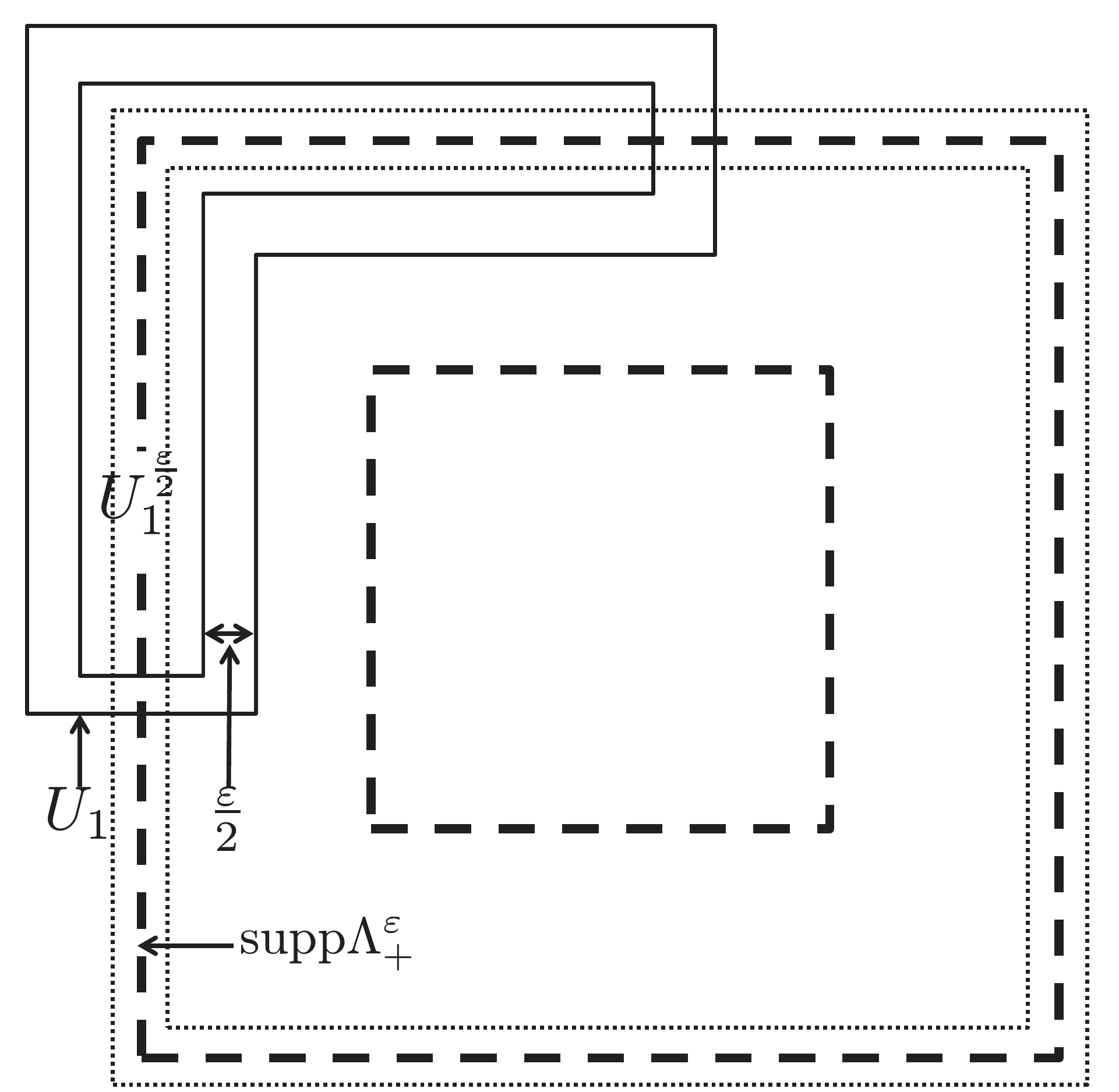}\\[3pt]
~~~{\Large (c)}
\end{center}
\end{minipage}\\
\caption{(a): The domain $\Omega$ of Example A.
(b) and (c): open sets $U_{i}$~$(i=1,2,\cdots,8)$.}
\label{hole/domain}
\end{figure}
The constants $A_{0},\ A_{1},\ P$, and $b_{\varepsilon}$ in Lemma \ref{sp/lemma} and Theorem \ref{main} were numerically calculated.
All computations were carried out on a computer with Intel Core i7 860 CPU 2.80 GHz, 16.0 GB RAM, Windows 7, and MATLAB 2012b.
Since all rounding errors were strictly estimated using INTLAB version 6 \cite{rump1999book}, a toolbox for verified numerical computations,
the accuracy of all results is mathematically guaranteed.

The constants $A_{0}$ and $A_{1}$ can be respectively computed as
\begin{align}
A_{0}&=\displaystyle \sup\left\{\left|t^{2}\psi\left(t\right)\right|\ :\ t\geq 1\right\}~~{\rm and}~~A_{1}=\sup\left\{\left|t^{3}\psi\left(t\right)\right|\ :\ t\geq 1\right\}
\end{align}
with the function $\psi:\mathbb{R}\rightarrow \mathbb{R}$ constructing the extension operator (\ref{extension/sp}) which satisfies the property (\ref{phi/prop}).
For example, the function
\begin{eqnarray}
\displaystyle \psi\left(t\right):=\frac{e}{\pi t}{\rm Im}\left(e^{-\omega\left(t-1\right)^{\frac{1}{4}}}\right),\ \ \ \omega=C_{\omega}e^{-\frac{i\pi}{4}}=\frac{C_{\omega}}{\sqrt{2}}\left(1-i\right)\label{concrete/phi}
\end{eqnarray}
satisfies that property for any $C_{\omega}>0$;
a simple proof can be seen in, e.g., \cite{edmunds1987book, stein1970book}.
For the function $\psi$ in (\ref{concrete/phi}) with $C_{\omega}=4.83$, we derived the following estimation results:
\begin{align*}
A_{0}\in[12.8860,\ 12.8861]~~{\rm and}~~A_{1}\in[12.9325,\ 12.9326].
\end{align*}
Moreover, recall that $b_{\varepsilon}$ is a positive number satisfying
\begin{align}
b_{\varepsilon}&\geq\int_{\mathbb{R}^{n}}\left|\partial_{x_{j}}\rho_{\frac{1}{4}\varepsilon}\left(x\right)\right|dx~\left(=\displaystyle \frac{4}{\varepsilon}\int_{\mathbb{R}^{n}}\left|\partial_{x_{1}}\rho\left(x\right)\right|dx\right).\label{bep}
\end{align}
For the mollifier defined in (\ref{sp/mol}), the bounds for the integration in (\ref{bep}) is independent of the index $j$.
Furthermore, one of the concrete values of $P$ can be derived by (\ref{fra/Palpha}) with the condition $|\alpha|=1$, i.e., it can be computed as
\begin{align*}
P&=\displaystyle \int_{\mathbb{R}^{n}}\left\{\left(n-1\right)\rho_{*}\left(\left|x\right|\right)+\left|x\right|\rho_{*}'\left(\left|x\right|\right)\right\}\left(1-\left|x\right|\right)^{-1}dx.
\end{align*}
Using verified numerical computation, we derived the following estimation results:
\begin{align*}
\displaystyle \int_{\mathbb{R}^{2}}\left|\partial_{x_{1}}\rho\left(x\right)\right|dx&\in[1.86412, 1.92770]~~{\rm and}~~P\in[7.45592,\ 7.50131]
\end{align*}
for the case of $n=2$.

\subsection{Examples of estimating the embedding constant}\label{example}
Here, we present estimation results for the following two concrete domains:
\subsubsection*{Example~A}
Let $\Omega\subset \mathbb{R}^{2}$ be the domain as in Fig.~\ref{hole/domain} (a).
We set $\left\{U_{i}\right\}_{i\in \mathbb{N}}$ as follows:
we first define the two sets among $U_{i}$'s as in Fig.~\ref{hole/domain} (b);
then, $U_{i}$'s $(i=1,2,\cdots,8)$ were obtained by symmetry reflections;
finally, we defined the other $U_{i}$'s $(i=9,10,11,\cdots)$ as empty sets.
In this case, we chose $M=1,\ N=2$, and $\varepsilon=0.25$.
One can find in Fig.~\ref{hole/domain} (c) that these constants satisfy the required conditions mentioned in Theorem \ref{main}.

Figure~\ref{hole/graph} (a) shows the relationship between $\tau$ and $A_{q}\left(\Omega\right)$ in the cases of $p=4,\ 6,$ and $8$; recall that $q=2p/(2+p)$.
One can observe that $A_{q}\left(\Omega\right)$ first decreases with increasing $\tau$, then it reaches a minimum point, and thereafter it monotonically increases with increasing $\tau$.
The relationship between $p$ and the value of $\tau$ minimizing $A_{q}\left(\Omega\right)$ can be seen in Fig.~\ref{hole/graph} (b).
For example, in the cases of $p=4,\ 6,$ and $8$, each $A_{q}\left(\Omega\right)$ is minimized at the points $\tau\approx 8.12,$~$5.83,$~and $5.06,$ respectively.

Figure~\ref{hole/graph} (c) shows the relationship between $p$ and $C_{p}\left(\Omega\right)$;
we chose $\tau$ which makes $A_{q}\left(\Omega\right)$ (and $C_{p}\left(\Omega\right)$) as small as possible.
Recall that all results in Fig.~\ref{hole/graph} were mathematically guaranteed with verified numerical computation.
\begin{figure}[t]
\begin{minipage}{0.5\hsize}
\begin{center}
  \includegraphics[height=50mm]{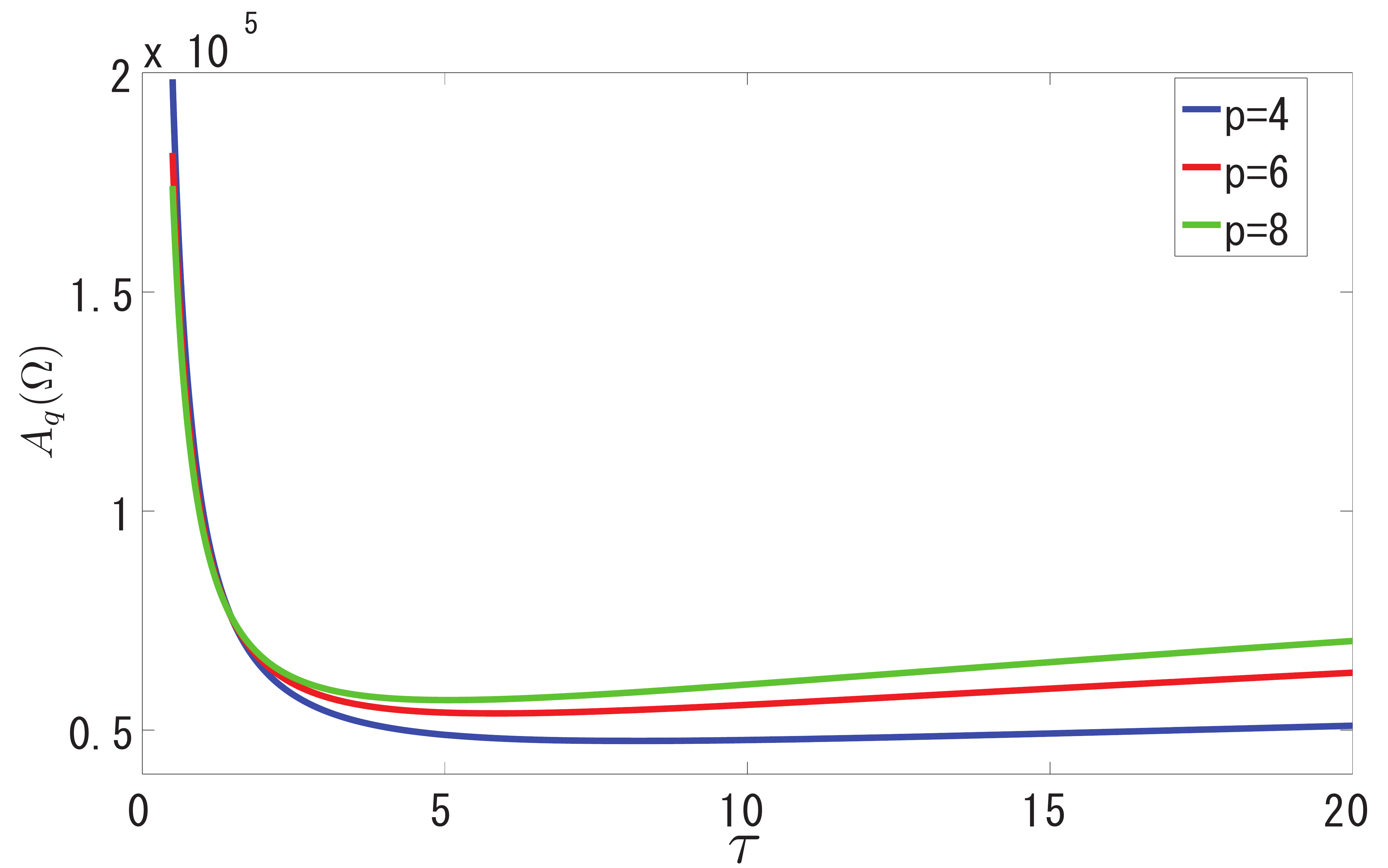}\\
~~~\,{\Large (a)}
\end{center}
\end{minipage}
\begin{minipage}{0.5\hsize}
\begin{center}
  \includegraphics[height=50mm]{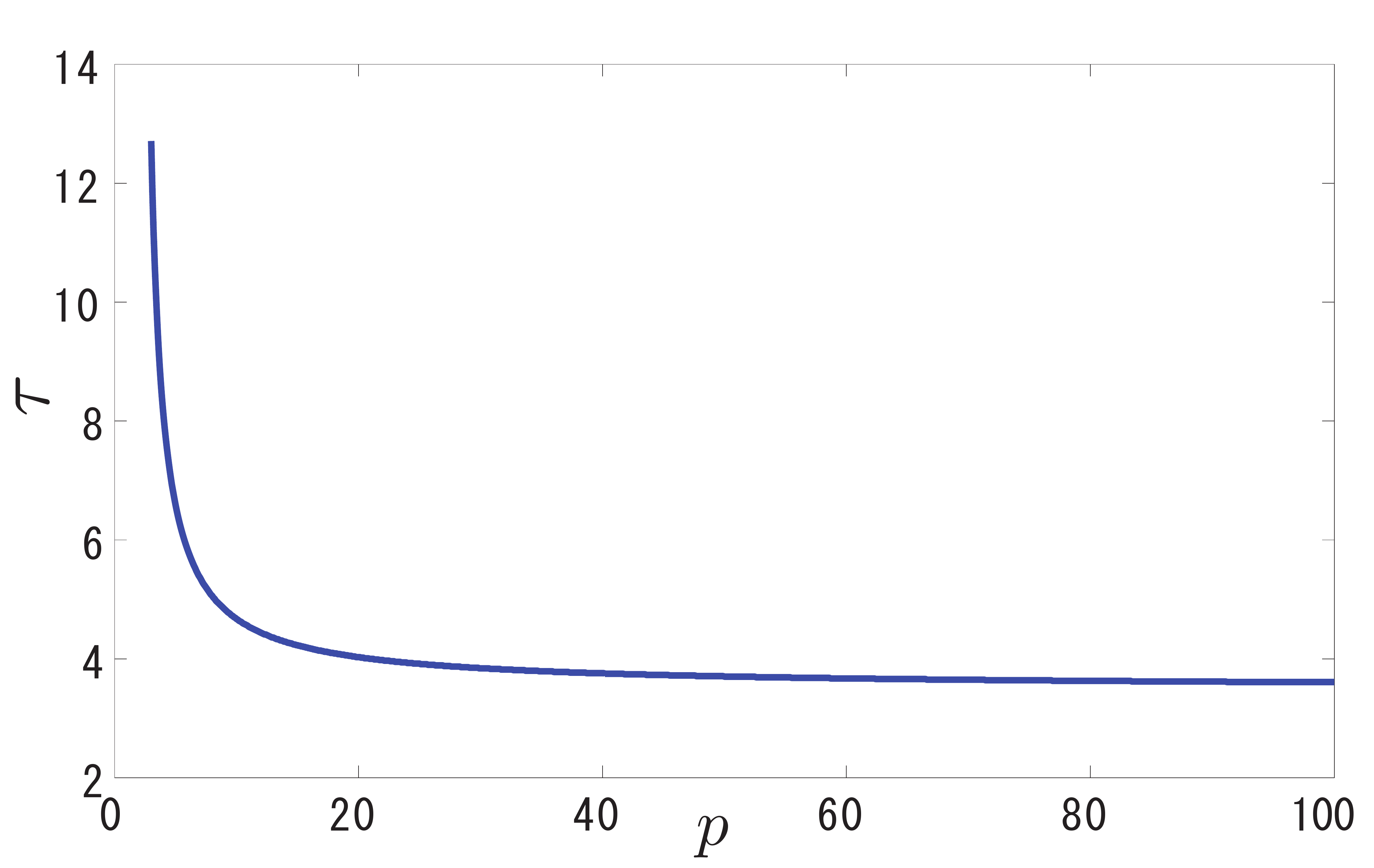}\\
~~\,{\Large (b)}
\end{center}
\end{minipage}
\begin{minipage}{\hsize}
\begin{center}
  \includegraphics[height=50mm]{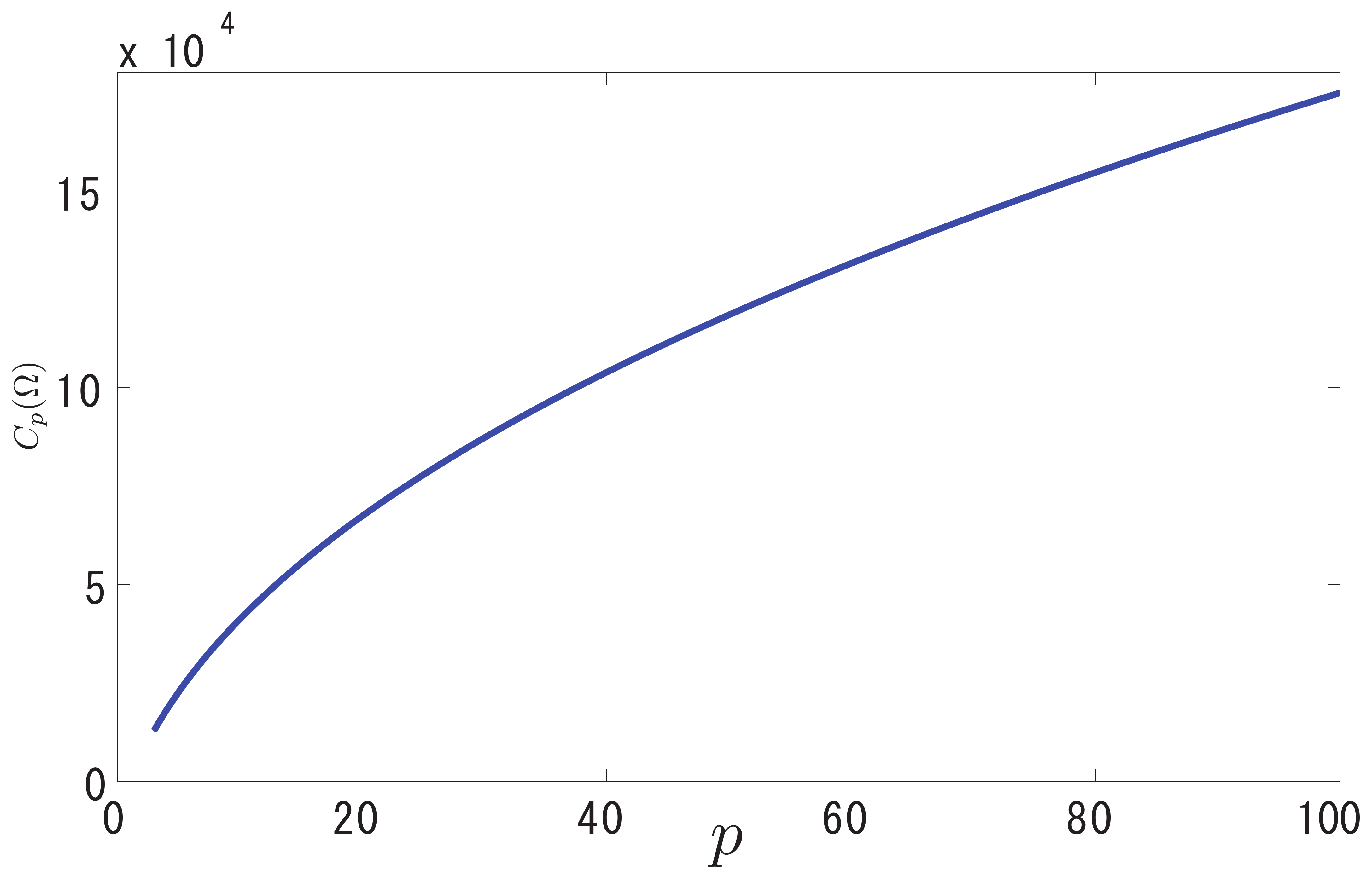}\\
~~\,{\Large (c)}
\end{center}
\end{minipage}\\
\caption{(a): The relationship between $\tau$ and $A_{q}\left(\Omega\right)$ with $p=4,$~$6,$~and~$8$.
(b): between $p$ and $\tau$ minimizing $A_{q}\left(\Omega\right)$.
(c): between $p$ and $C_{p}\left(\Omega\right)$.}
\label{hole/graph}
\end{figure}
\subsubsection*{Example~B}
Let $\Omega\subset \mathbb{R}^{2}$ be the domain as in Fig.~\ref{ufo} (a), of which boundary is composed of five semicircles and a straight line.
We set $\left\{U_{i}\right\}_{i\in \mathbb{N}}$ as follows:
we first set $U_{i}$'s $(i=1,2,\cdots,6)$ as in Fig.~\ref{ufo} (b)--(d);
then, we got the other $U_{i}$'s $(i=7,8,\cdots,10)$ by symmetrical reflection;
the other $U_{i}$'s $(i=11,12,\cdots)$ were defined as empty sets.
In this case, we chose $M=1,\ N=2$, and $\varepsilon=2\sin(\pi/8)/\{\sin(\pi/8)+1\}$.
The selection of $\varepsilon$ depends on the smallest semicircle that composes the boundary of $\Omega$.
One can find in Fig.~\ref{howep} that $\varepsilon=2\sin(\pi/8)/\{\sin(\pi/8)+1\}$ satisfies the required condition in Theorem \ref{main}.
The graphs of $A_{q}\left(\Omega\right),\ \tau$ minimizing $A_{q}\left(\Omega\right)$, and $C_{p}\left(\Omega\right)$ are also displayed in Fig.~\ref{ufo/graph}.

\begin{figure}[h]
\begin{minipage}{0.5\hsize}
\begin{center}
  \includegraphics[height=60mm]{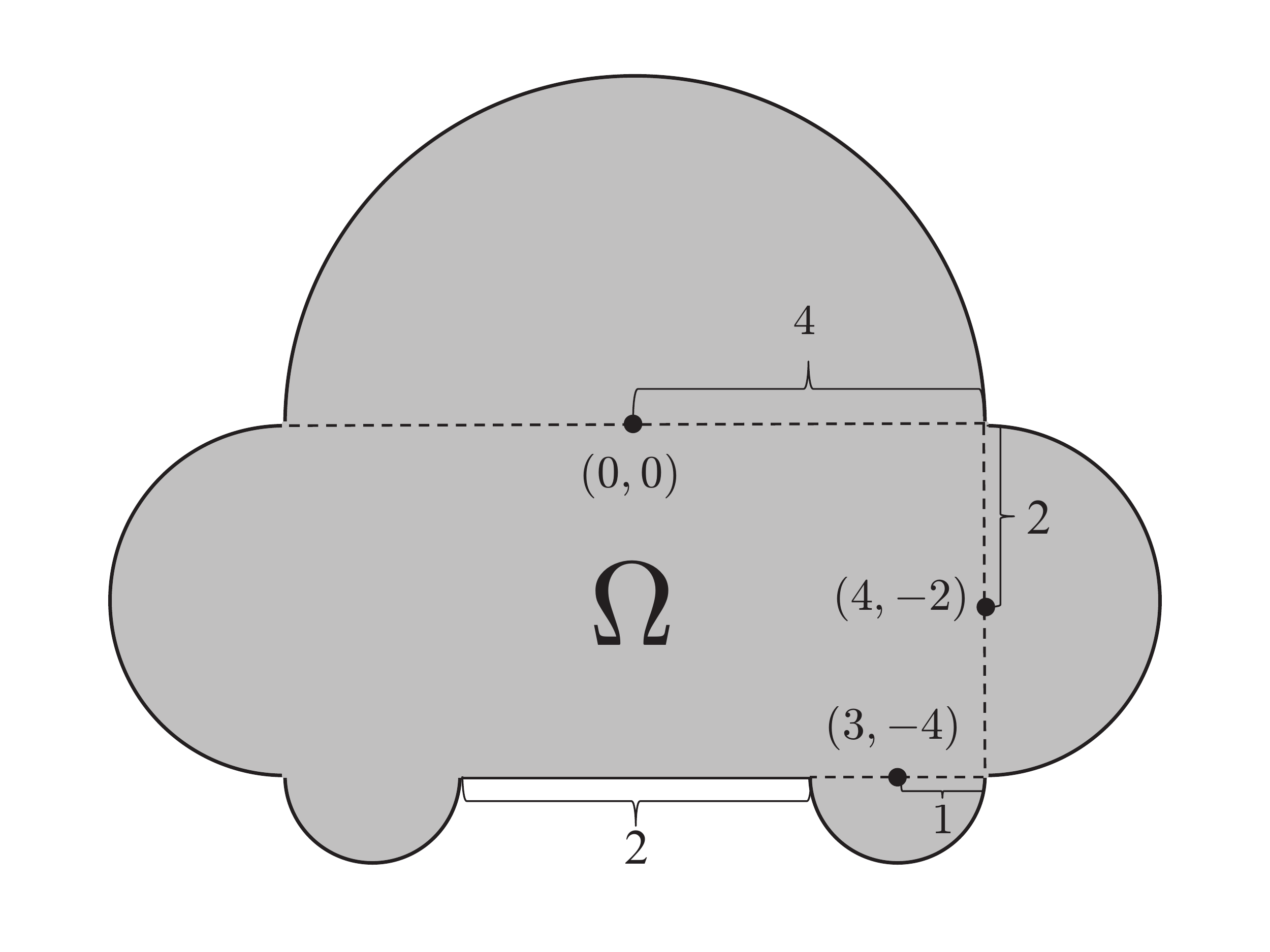}\\[-8pt]
{\Large (a)}
\end{center}
\end{minipage}
\begin{minipage}{0.5\hsize}
\begin{center}
\includegraphics[height=60mm]{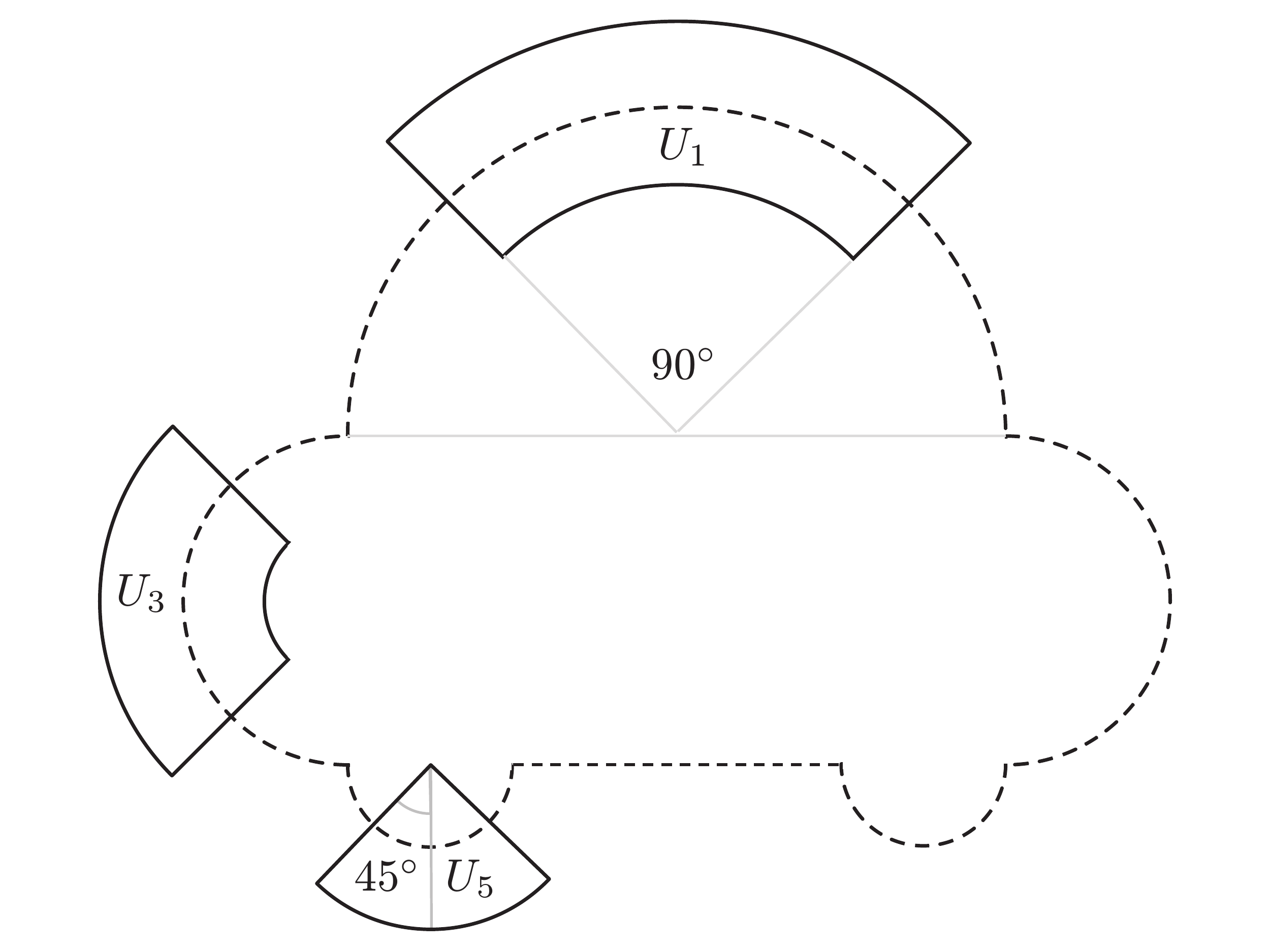}\\[-8pt]
~~~~{\Large (b)}
\end{center}
\end{minipage}\\[9pt]
\begin{minipage}{0.5\hsize}
\begin{center}
  \includegraphics[height=60mm]{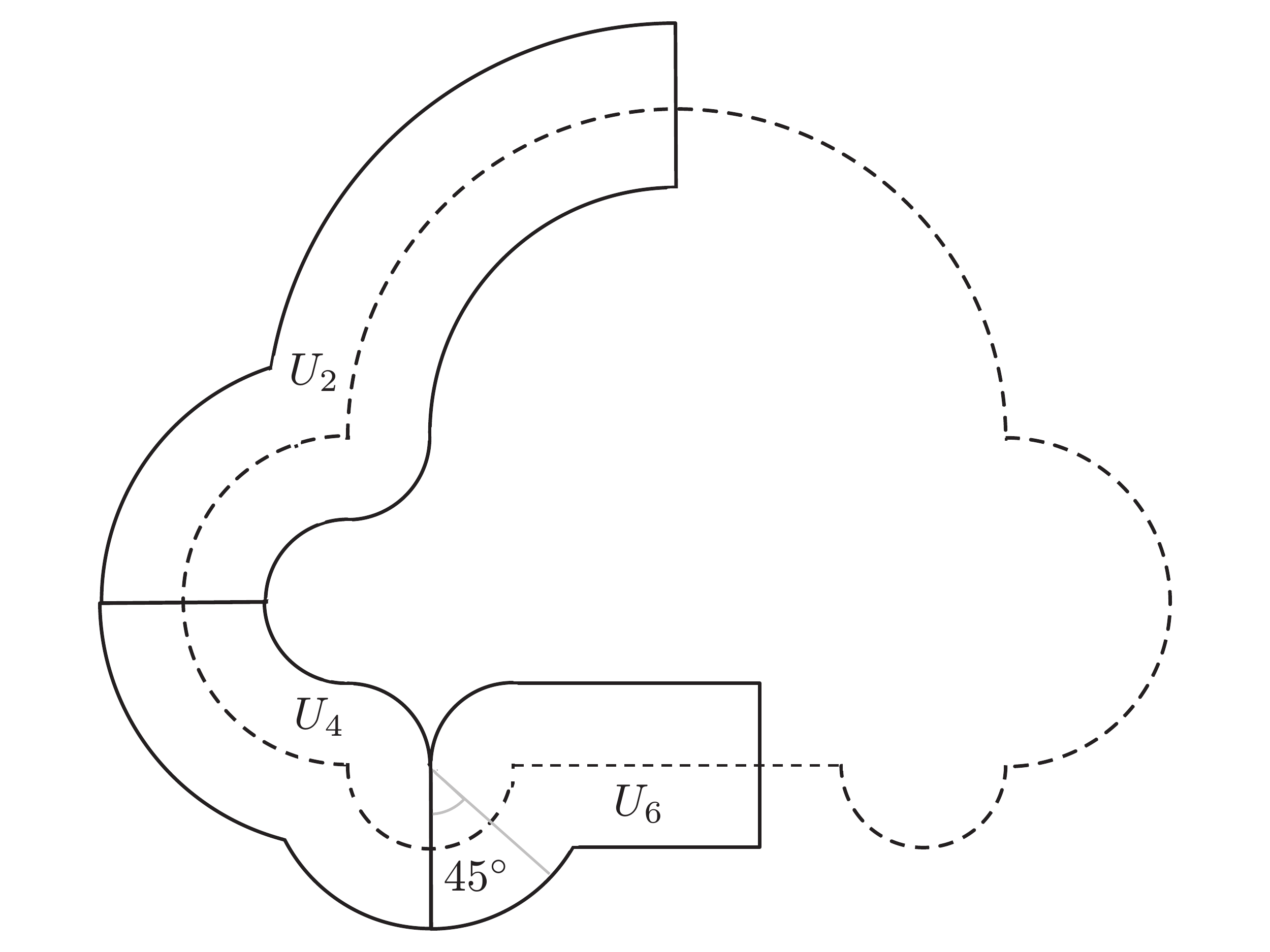}\\
\,{\Large (c)}
\end{center}
\end{minipage}
\hspace{3mm}\begin{minipage}{0.5\hsize}
\begin{center}
  \includegraphics[height=60mm]{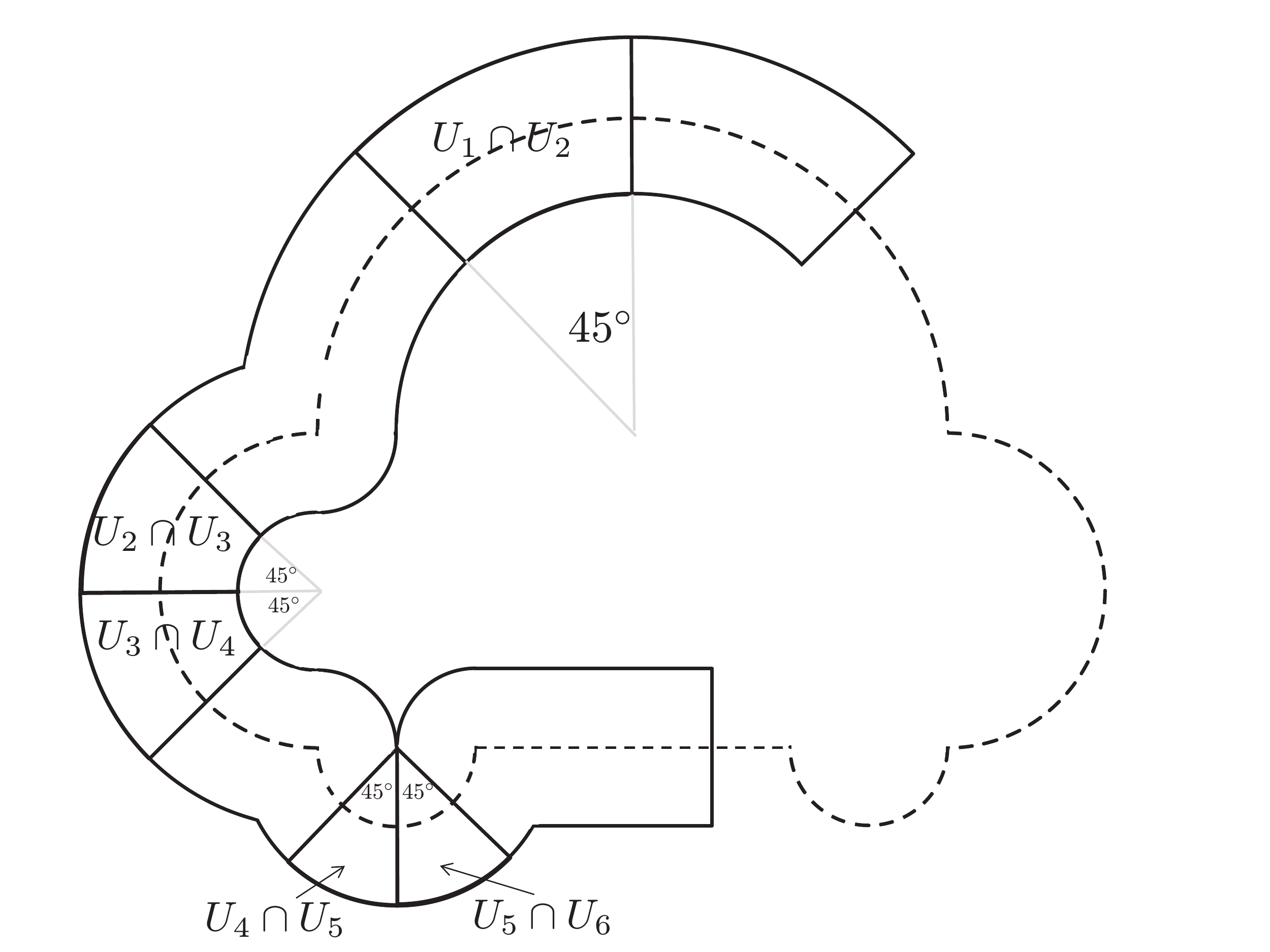}\\
{\Large (d)}
\end{center}
\end{minipage}
\vspace{6pt}\caption{(a): the domain $\Omega$ of Example B.
(b)--(d): $U_{i}$ $(i=1,2,\cdots,6)$ are displayed; the other $U_{i}\ (i=7,8,\cdots,10)$ can be obtained by symmetrical reflection.}
\label{ufo}
\end{figure}

\begin{figure}[p]
\begin{center}
  \includegraphics[width=70mm]{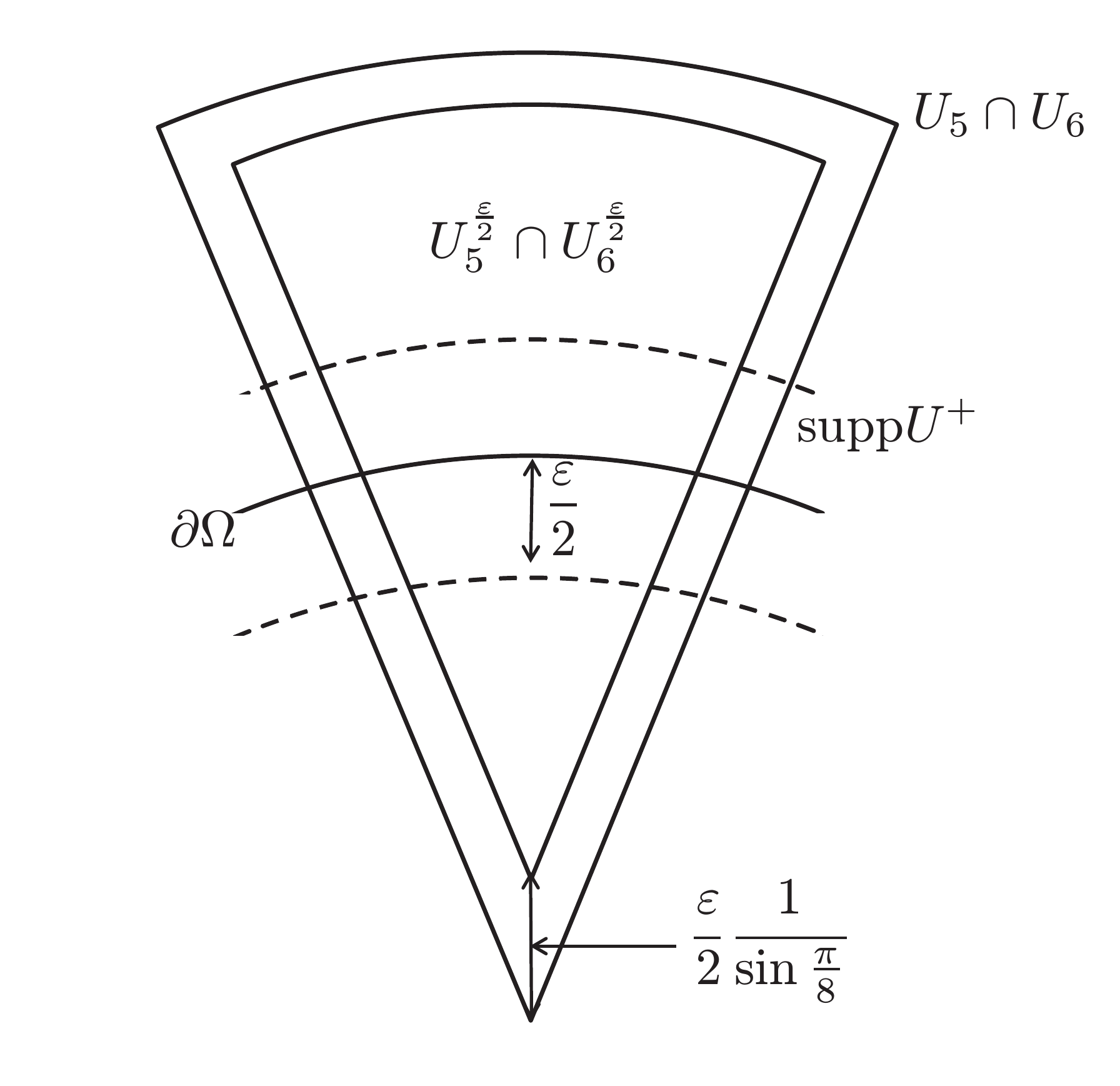}
\end{center}
\caption{How to determine $\varepsilon$.}
\label{howep}
\end{figure}

\begin{figure}[p]
\begin{minipage}{0.5\hsize}
\begin{center}
  \includegraphics[height=50mm]{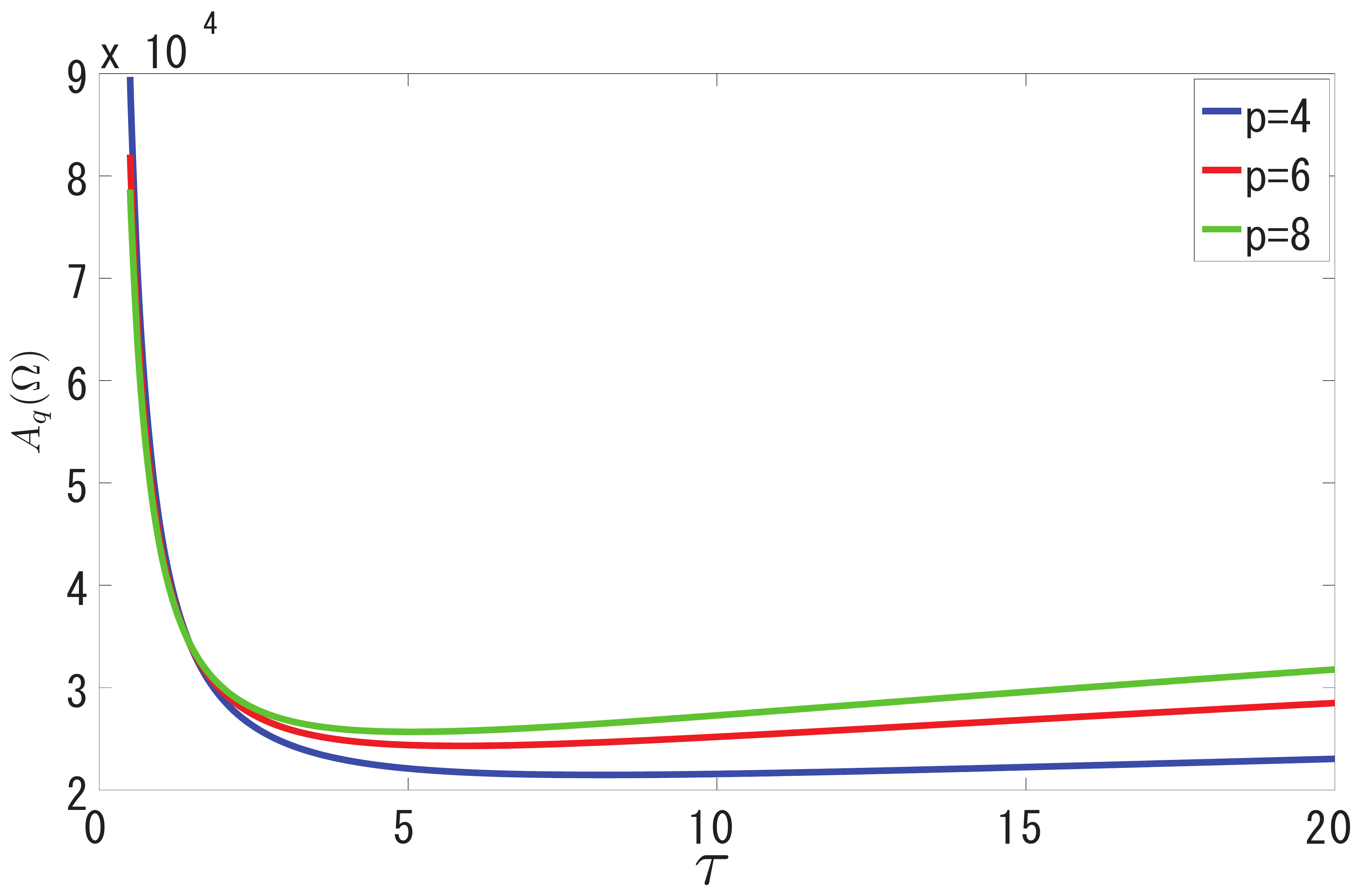}\\[2pt]
~~{\Large (a)}
\end{center}
\end{minipage}
\begin{minipage}{0.5\hsize}
\begin{center}
  \includegraphics[height=50mm]{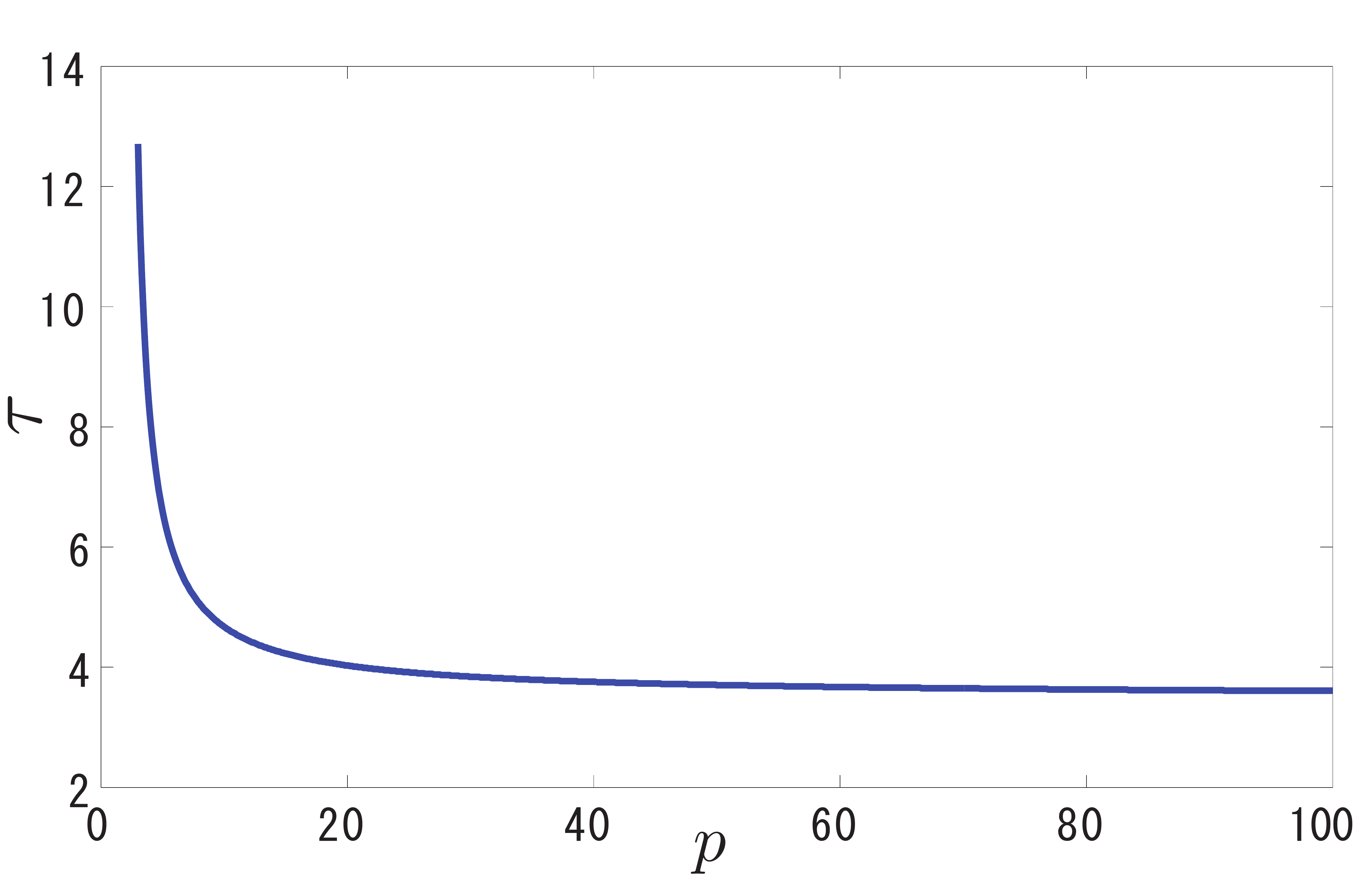}\\[2pt]
~\,{\Large (b)}
\end{center}
\end{minipage}
\begin{minipage}{\hsize}
\begin{center}
  \includegraphics[height=50mm]{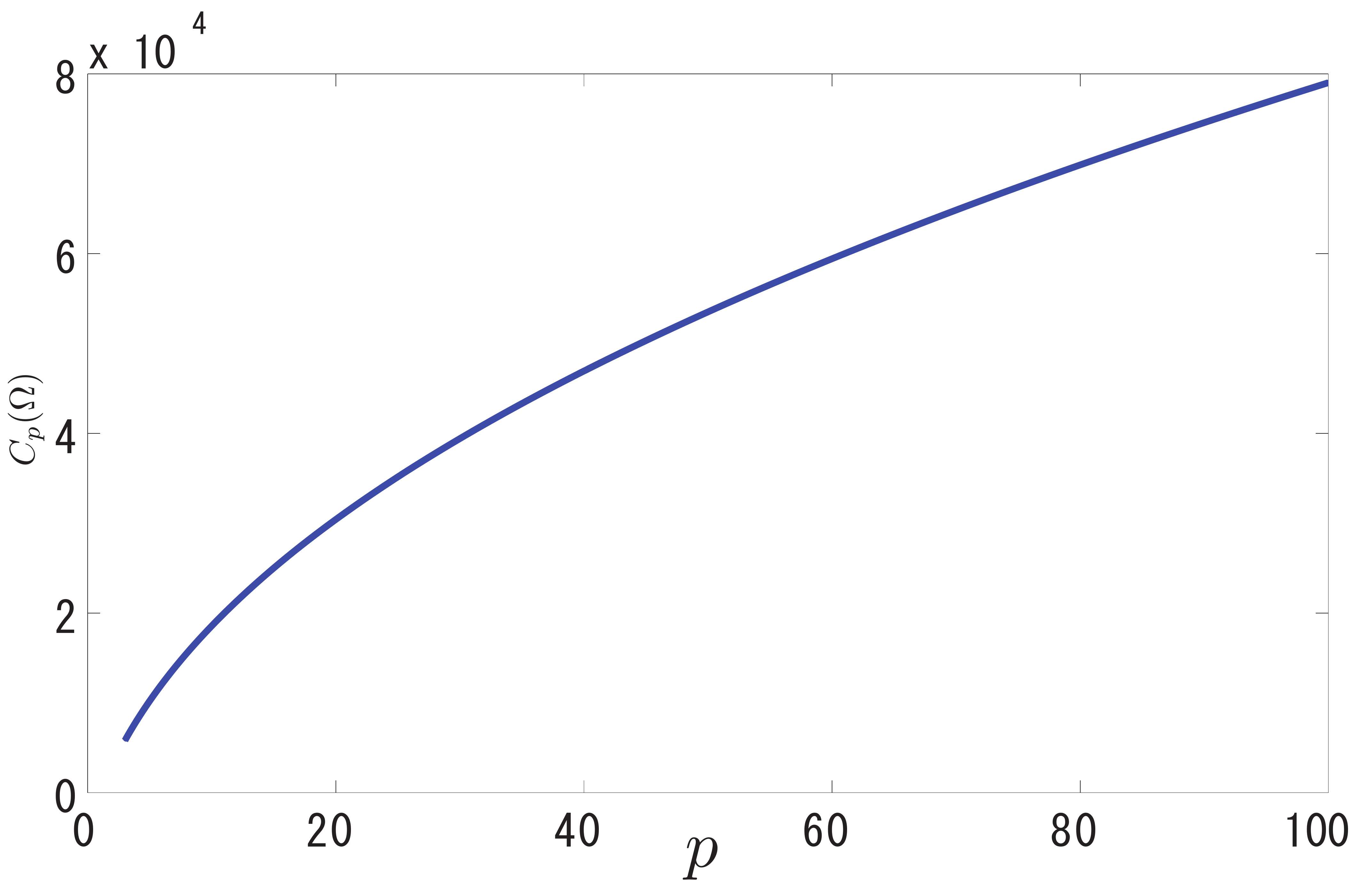}\\
~{\Large (c)}
\end{center}
\end{minipage}\\
\caption{Same as Fig.~\ref{hole/graph},
but in the case of the domain $\Omega$ in Fig.~\ref{ufo} (a).
}
\label{ufo/graph}
\end{figure}

\newpage

\section{Conclusion}
We proposed the method for estimating the operator norm $A_{q}\left(\Omega\right)$ (defined in (\ref{intro/aq})) of the extension operator constructed by Stein \cite{stein1970book}.
The concrete bounds for the operator norm leads to estimate the embedding constant $C_{p}(\Omega)$ from $W^{1,q}(\Omega)$ to $L^{p}(\Omega)$ defined in (\ref{purpose}).
Here, $\Omega$ is only assumed to be a domain with minimally smooth boundary.

In addition, we presented some estimation results of the embedding constants.
All estimation results are mathematically guaranteed with verified numerical computation, while the derived constants may not be sharp because of some over-estimations.

\appendix
\section{The best constant in the classical Sobolev inequality}\label{apA}
The following theorem gives the best constant in the classical Sobolev inequality.
\begin{theo}[T.~Aubin \cite{aubin1976} and G.~Talenti \cite{talenti1976}]\label{talentitheo}
Let $u$ be any function in $H^{1}\left(\mathbb{R}^{n}\right)\ (n=2,3,\cdots)$.
Moreover, let $q$ be any real number such that $1<q<n$, and set $p=nq/\left(n-q\right)$.
Then,
\begin{align*}
\left\|u\right\|_{L^{p}\left(\mathbb{R}^{n}\right)}\leq T_{p}\left\|\nabla u\right\|_{L^{q}\left(\mathbb{R}^{n}\right)}
\end{align*}
holds for
\begin{align*}
T_{p}=\pi^{-\frac{1}{2}}n^{-\frac{1}{q}}\left(\frac{q-1}{n-q}\right)^{1-\frac{1}{q}}\left\{\frac{\Gamma\left(1+\frac{n}{2}\right)\Gamma\left(n\right)}{\Gamma\left(\frac{n}{q}\right)\Gamma\left(1+n-\frac{n}{q}\right)}\right\}^{\frac{1}{n}}
\end{align*}
with the Gamma function $\Gamma$.
\end{theo}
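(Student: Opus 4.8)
This is a classical theorem, so I only sketch the proof, following Talenti's original argument \cite{talenti1976} (Aubin's proof \cite{aubin1976} is close in spirit) and referring to those papers for the full details. \textbf{Step 1: reduction to radial functions by symmetrization.} Since $\bigl|\nabla|u|\bigr|\le|\nabla u|$ a.e., I may assume $u\ge 0$. Let $u^{*}$ be the symmetric decreasing rearrangement (Schwarz symmetrization) of $u$. Equimeasurability gives $\|u^{*}\|_{L^{p}(\mathbb{R}^{n})}=\|u\|_{L^{p}(\mathbb{R}^{n})}$, and the P\'olya--Szeg\H{o} inequality gives $\|\nabla u^{*}\|_{L^{q}(\mathbb{R}^{n})}\le\|\nabla u\|_{L^{q}(\mathbb{R}^{n})}$. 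Hence it suffices to establish the inequality with the constant $T_{p}$ for nonnegative, radially symmetric, nonincreasing functions $u=u(r)$, $r=|x|$, and any extremizer can be sought among such functions.

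\textbf{Step 2: the one-dimensional variational problem and the explicit constant.} For radial $u$, writing $\omega_{n-1}=2\pi^{n/2}/\Gamma(n/2)$ for the area of the unit sphere, the Sobolev quotient becomes the minimization of
\[
\frac{\omega_{n-1}\int_{0}^{\infty}|u'(r)|^{q}\,r^{n-1}\,dr}{\bigl(\omega_{n-1}\int_{0}^{\infty}u(r)^{p}\,r^{n-1}\,dr\bigr)^{q/p}}.
\]
This is precisely the content of Bliss's inequality (G.~A.~Bliss, 1930): either by writing down the radial Euler--Lagrange equation, or by a change of variable that turns the problem into a sharp weighted one-dimensional inequality on $(0,\infty)$, one finds that the minimizers are exactly the profiles
\[
u(r)=\bigl(a+b\,r^{q/(q-1)}\bigr)^{-(n-q)/q},\qquad a,b>0.
\]
Substituting $u(r)=(1+r^{q/(q-1)})^{-(n-q)/q}$ back into the quotient and evaluating the resulting integrals through the Beta-function identity $\int_{0}^{\infty}r^{s-1}(1+r^{a})^{-m}\,dr=a^{-1}\Gamma(s/a)\Gamma(m-s/a)/\Gamma(m)$, a lengthy but routine simplification yields the stated value of $T_{p}$. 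One checks that this profile belongs to the relevant function space when $1<q<n$, so $T_{p}$ is attained and hence sharp.

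\textbf{Main obstacle and an alternative.} The genuinely nontrivial ingredients are the P\'olya--Szeg\H{o} inequality---which itself rests on the coarea formula and the isoperimetric inequality in $\mathbb{R}^{n}$---and the sharpness of the one-dimensional Bliss inequality, i.e.\ a rigorous calculus-of-variations argument that the radial profiles above are global minimizers; the Gamma-function bookkeeping in Step~2 is then only bookkeeping. A proof that bypasses symmetrization altogether is the mass-transportation argument of Cordero-Erausquin, Nazaret and Villani: let $\nabla\varphi$ be the Brenier map pushing the probability density $u^{p}/\|u\|_{L^{p}}^{p}$ forward to the density of the extremal bubble; the Monge--Amp\`ere equation, the arithmetic--geometric-mean inequality applied to the eigenvalues of $D^{2}\varphi$, an integration by parts, and H\"older's inequality together reproduce the same sharp constant. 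Either route establishes the theorem.
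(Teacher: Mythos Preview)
Your sketch is a faithful outline of Talenti's symmetrization argument and correctly identifies the key ingredients (P\'olya--Szeg\H{o}, Bliss's inequality, the Beta-function computation), with the optimal-transport alternative noted as a bonus. There is nothing to compare against, however: the paper does not prove this theorem at all. It is quoted in the appendix as a classical result, attributed to Aubin and Talenti with citations only, and used as a black box in Corollary~\ref{embedding/coro}. So your proposal already goes well beyond what the paper supplies; a one-line reference to \cite{aubin1976,talenti1976} would match the paper exactly.
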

\section{Lemmas for proving Lemma \ref{sp/lemma} and Theorem \ref{main}}\label{apC}
The following two lemmas were used in the proof of Lemma \ref{sp/lemma} and Theorem \ref{main}.
\begin{lem}[G.H.~Hardy, et al. \cite{hardy1952book}]\label{hardy}
Let $p\in \mathbb{N}$ and let $r>0$.
Suppose that a function $f:\mathbb{R}\rightarrow \mathbb{R}$ satisfies $f\left(x\right)\geq 0,\ \forall x\in \mathbb{R}$.
Then, it follows that
\begin{align*}
\left(\int_{0}^{\infty}\left(\int_{0}^{x}f\left(y\right)dy\right)^{p}x^{-r-1}dx\right)^{1/p}&\displaystyle \leq\frac{p}{r}\left(\int_{0}^{\infty}\left(yf\left(y\right)\right)^{p}y^{-r-1}dy\right)^{1/p},
\shortintertext{and}
\left(\int_{0}^{\infty}\left(\int_{x}^{\infty}f\left(y\right)dy\right)^{p}x^{r-1}dx\right)^{1/p}&\displaystyle \leq\frac{p}{r}\left(\int_{0}^{\infty}\left(yf\left(y\right)\right)^{p}y^{r-1}dy\right)^{1/p}.
\end{align*}
\end{lem}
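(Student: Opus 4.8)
The plan is to derive both inequalities, with the stated constant $p/r$, from a single scaling substitution together with the integral form of Minkowski's inequality; this handles the two cases symmetrically and avoids any discussion of boundary terms.

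For the first inequality I would set $F(x):=\int_0^x f(y)\,dy$ and substitute $y=xt$ to get $F(x)=x\int_0^1 f(xt)\,dt$, hence
\[
F(x)\,x^{-(r+1)/p}=\int_0^1 f(xt)\,x^{(p-r-1)/p}\,dt .
\]
Since $p\in\mathbb{N}$ gives $p\ge 1$, Minkowski's integral inequality applied to the $dt$-integral over $[0,1]$ yields
\[
\left(\int_0^\infty F(x)^p x^{-r-1}\,dx\right)^{1/p}\le\int_0^1\left(\int_0^\infty f(xt)^p x^{p-r-1}\,dx\right)^{1/p}\,dt ,
\]
and the further substitution $u=xt$ turns the inner integral into $t^{\,r-p}\int_0^\infty (uf(u))^p u^{-r-1}\,du$. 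Thus the right-hand side equals $\left(\int_0^1 t^{\,r/p-1}\,dt\right)\left(\int_0^\infty (uf(u))^p u^{-r-1}\,du\right)^{1/p}$, and because $r>0$ the $t$-integral evaluates to exactly $p/r$.

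The second inequality is obtained in the same way with $G(x):=\int_x^\infty f(y)\,dy$ and $y=xt$, so that $G(x)=x\int_1^\infty f(xt)\,dt$; Minkowski's integral inequality over $t\in[1,\infty)$ followed by $u=xt$ gives
\[
\left(\int_0^\infty G(x)^p x^{r-1}\,dx\right)^{1/p}\le\left(\int_1^\infty t^{-1-r/p}\,dt\right)\left(\int_0^\infty (uf(u))^p u^{r-1}\,du\right)^{1/p},
\]
and $\int_1^\infty t^{-1-r/p}\,dt=p/r$, again since $r>0$. (Equivalently, the second inequality follows from the first via the reflection $x\mapsto 1/x$, $y\mapsto 1/y$, which replaces $f$ by $\widetilde f(w):=w^{-2}f(1/w)$ and interchanges the two statements.) In both parts one may assume the right-hand side is finite, so that Tonelli's theorem justifies the use of Minkowski's inequality; if it is infinite there is nothing to prove.

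There is no real obstacle; the only point needing care is the bookkeeping of the weight exponents so that $u=xt$ produces a clean power of $t$ whose integral over $[0,1]$ (resp.\ $[1,\infty)$) converges precisely to $p/r$, and this is exactly where the hypothesis $r>0$ is used. If one prefers to avoid the integral Minkowski inequality, an alternative is to integrate $\int_0^\infty F^p x^{-r-1}\,dx$ by parts with antiderivative $-x^{-r}/r$, verify that the boundary terms vanish under the finiteness assumption, bound $\int_0^\infty F^{p-1}f\,x^{-r}\,dx$ by H\"older's inequality with exponents $p/(p-1)$ and $p$, and divide through by $\left(\int_0^\infty F^p x^{-r-1}\,dx\right)^{(p-1)/p}$; this recovers the same constant, and the companion inequality follows analogously or by the reflection above.
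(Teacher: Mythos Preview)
Your argument is correct: the Minkowski-in-integral-form proof you outline is one of the standard routes to Hardy's inequality with the sharp constant $p/r$, and your exponent bookkeeping checks out in both parts. The paper itself does not prove this lemma at all; it is simply quoted with a citation to Hardy--Littlewood--P\'olya, so there is no ``paper's proof'' to compare against. Your write-up therefore supplies more than the paper does, and either of the two methods you sketch (Minkowski after the scaling $y=xt$, or integration by parts plus H\"older) would serve as a self-contained proof if one were desired.
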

\begin{lem}\label{fact}
Let $S\subseteq \mathbb{R}^{n}$ and $p\in[1,\infty)$.
Moreover, let $\left\{a_{i}\left(x\right)\right\}_{i\in \mathbb{N}}\subset L^{p}\left(S\right)$ satisfy that at most $N$ of $a_{i}\left(x\right)$ are not zero for each $x$.
Then, it follows that
\begin{align*}
\left(\int_{S}\left|\sum_{i\in \mathbb{N}}a_{i}\left(x\right)\right|^{p}dx\right)^{\frac{1}{p}}\leq N^{1-\frac{1}{p}}\left(\sum_{i\in \mathbb{N}}\int_{S}\left|a_{i}\left(x\right)\right|^{p}dx\right)^{\frac{1}{p}}.
\end{align*}
\end{lem}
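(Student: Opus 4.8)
The plan is to reduce the inequality to a pointwise estimate in $x$ followed by integration over $S$. Fix $x\in S$ and set $I(x):=\{i\in\mathbb{N}:a_i(x)\neq 0\}$; by hypothesis $\#I(x)\le N$, so $\sum_{i\in\mathbb{N}}a_i(x)=\sum_{i\in I(x)}a_i(x)$ is a \emph{finite} sum and no convergence issue arises at individual points. By the triangle inequality, $\bigl|\sum_{i\in\mathbb{N}}a_i(x)\bigr|\le\sum_{i\in I(x)}|a_i(x)|$.

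The key step is to apply H\"older's inequality to the finite sum over $I(x)$ (equivalently, Jensen's inequality for the convex function $t\mapsto t^p$ on $[0,\infty)$, which is legitimate since $p\ge1$): writing $\sum_{i\in I(x)}|a_i(x)|=\sum_{i\in I(x)}1\cdot|a_i(x)|$ and using the conjugate exponents $p/(p-1)$ and $p$ gives
\[
\left(\sum_{i\in I(x)}|a_i(x)|\right)^p\le(\#I(x))^{p-1}\sum_{i\in I(x)}|a_i(x)|^p\le N^{p-1}\sum_{i\in\mathbb{N}}|a_i(x)|^p .
\]
Combining this with the triangle inequality yields the pointwise bound $\bigl|\sum_{i\in\mathbb{N}}a_i(x)\bigr|^p\le N^{p-1}\sum_{i\in\mathbb{N}}|a_i(x)|^p$ for every $x\in S$.

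Finally, I would integrate this over $S$ and invoke Tonelli's theorem to interchange the integral with the countable sum of the nonnegative measurable functions $|a_i|^p$, obtaining $\int_S\bigl|\sum_{i\in\mathbb{N}}a_i(x)\bigr|^p\,dx\le N^{p-1}\sum_{i\in\mathbb{N}}\int_S|a_i(x)|^p\,dx$; taking $p$-th roots gives the claim. The case $p=1$ is just the ordinary triangle inequality (with $N^{0}=1$) and requires no separate treatment. There is no genuine obstacle here; the only point deserving a word of care is the measurability of $\sum_{i\in\mathbb{N}}a_i$ and the justification of exchanging sum and integral, both of which follow from measurability of each partial sum together with the nonnegativity of the integrands involved, so I would keep the written proof brief.
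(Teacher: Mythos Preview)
Your argument is correct and follows exactly the route the paper takes: the paper's proof simply records the pointwise inequality $\bigl|\sum_{i}a_i(x)\bigr|^{p}\le N^{p-1}\sum_i|a_i(x)|^{p}$ and says the lemma follows, while you additionally justify this pointwise bound via H\"older/Jensen and make explicit the Tonelli step. Nothing needs to be changed.
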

\begin{proof}
This lemma follows from the following inequality:
\begin{align*}
\displaystyle \left|\sum_{i\in \mathbb{N}}a_{i}\left(x\right)\right|^{p}\leq N^{p-1}\sum_{i\in \mathbb{N}}\left|a_{i}\left(x\right)\right|^{p}.
\end{align*}
\end{proof}
\section{The embedding constant from $H^{1}\left(\Omega\right)$ to $L^{p}(\Omega)$}\label{apB}
Corollary \ref{coroap}, which comes from Theorem \ref{main}, gives a concrete estimation of the embedding constant from $H^{1}\left(\Omega\right)$ to $L^{p}(\Omega)$ under the suitable assumptions for $\Omega$ and $p$.
\begin{coro}\label{coroap}
Let $n\in\left\{2,3,\cdots\right\}$ and let $p$ be given, s.t., $p\in(n/(n-1),2n/(n-2))$ if $n\geq 3$ and $p\in(n/(n-1),\infty)$ if $n=2$.
Let $T_{p}$ be a constant in the classical Sobolev inequality, i.e.,
$\left\|u\right\|_{L^{p}\left(\mathbb{R}^{n}\right)}\leq T_{p}\left\|\nabla u\right\|_{L^{q}\left(\mathbb{R}^{n}\right)}$
for all
$u\in W^{1,q}(\mathbb{R}^{n})$,
where $q=np/(n+p)$.
Moreover, let $\Omega\subset \mathbb{R}^{n}$ be a bounded domain with minimally smooth boundary.
Then,
\begin{align*}
\left\|u\right\|_{L^{p}(\Omega)}\leq C_{p}'\left(\Omega\right)\left\|u\right\|_{H^{1}(\Omega)},\ \forall u\in H^{1}(\Omega),
\end{align*}
holds for
\begin{align*}
C_{p}'\left(\Omega\right)=2^{1/2}\left|\Omega\right|^{\frac{2-q}{2q}}T_{p}A_{q}\left(\Omega\right).
\end{align*}
Here, $\left\|\cdot\right\|_{W^{1,q}(\Omega)}$ denotes the $\sigma$-weighted $W^{1,q}$ norm $(\ref{sigmanorm})$ for given $\sigma>0$, and $A_{q}\left(\Omega\right)$ is the upper bound for the operator norm derived by Theorem \ref{main} with $\gamma=\sigma^{1/2}$.
\end{coro}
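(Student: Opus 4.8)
The plan is to compose the extension operator of Theorem~\ref{main} with the classical Sobolev inequality of Theorem~\ref{talentitheo}, exactly along the lines of the proof of Corollary~\ref{embedding/coro}, and then to trade the $L^{q}$ norms that appear there for the $L^{2}$ norms of the $H^{1}$ norm by means of H\"older's inequality on the bounded domain $\Omega$.

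First I would check that the hypotheses on $p$ are precisely what is needed. Writing $q=np/(n+p)$, one has $q>1\iff p>n/(n-1)$, $q<n$ for every admissible $p$ (since this amounts to $0<n^{2}$), and $q<2\iff p(n-2)<2n$, i.e.\ $p<2n/(n-2)$ when $n\geq 3$ and no restriction when $n=2$. Hence $1<q<n$, so Theorem~\ref{talentitheo} applies with this $q$; moreover $q<2$, and since $\Omega$ is bounded H\"older's inequality yields the inclusion $L^{2}(\Omega)\hookrightarrow L^{q}(\Omega)$, so that $H^{1}(\Omega)\subset W^{1,q}(\Omega)$ and $Eu\in W^{1,q}(\mathbb{R}^{n})$ is well defined for every $u\in H^{1}(\Omega)$, where $E=E_{\Omega,\tau,\xi,\varepsilon}$ is the operator of \eqref{gene/extension/siki}.

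Next I would run the chain
\begin{align*}
\|u\|_{L^{p}(\Omega)}\leq\|Eu\|_{L^{p}(\mathbb{R}^{n})}\leq T_{p}\|\nabla Eu\|_{L^{q}(\mathbb{R}^{n})}\leq T_{p}A_{q}(\Omega)\left(\|\nabla u\|_{L^{q}(\Omega)}+\sigma^{1/2}\|u\|_{L^{q}(\Omega)}\right),
\end{align*}
where the first inequality uses $Eu=u$ on $\Omega$, the second is Theorem~\ref{talentitheo} applied to $Eu$, and the third is Theorem~\ref{main} with integrability exponent $q$ and $\gamma=\sigma^{1/2}$. Then H\"older's inequality with exponents $2/q$ and $2/(2-q)$ gives $\|v\|_{L^{q}(\Omega)}\leq|\Omega|^{\frac{2-q}{2q}}\|v\|_{L^{2}(\Omega)}$ for $v=|\nabla u|$ and for $v=u$, whence
\begin{align*}
\|\nabla u\|_{L^{q}(\Omega)}+\sigma^{1/2}\|u\|_{L^{q}(\Omega)}\leq|\Omega|^{\frac{2-q}{2q}}\left(\|\nabla u\|_{L^{2}(\Omega)}+\sigma^{1/2}\|u\|_{L^{2}(\Omega)}\right)\leq 2^{1/2}|\Omega|^{\frac{2-q}{2q}}\|u\|_{H^{1}(\Omega)},
\end{align*}
the final inequality being $a+b\leq\sqrt{2}\,(a^{2}+b^{2})^{1/2}$ with $a=\|\nabla u\|_{L^{2}(\Omega)}$, $b=\sigma^{1/2}\|u\|_{L^{2}(\Omega)}$, together with $a^{2}+b^{2}=\|u\|_{H^{1}(\Omega)}^{2}$ for the $\sigma$-weighted norm \eqref{sigmanorm} with exponent $2$. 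Combining the two displays produces $C_{p}'(\Omega)=2^{1/2}|\Omega|^{\frac{2-q}{2q}}T_{p}A_{q}(\Omega)$.

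No deep difficulty is expected here, since this is essentially a re-weighting of Corollary~\ref{embedding/coro}; the step that requires the most care is the bookkeeping in the H\"older estimate, where the upper restriction $p<2n/(n-2)$ (equivalently $q<2$) is genuinely used so that $L^{2}(\Omega)\hookrightarrow L^{q}(\Omega)$ holds on the bounded domain. The choice $\gamma=\sigma^{1/2}$ in the application of Theorem~\ref{main}, rather than $\gamma=\sigma^{1/q}$ as in Corollary~\ref{embedding/coro}, is precisely what makes $\sigma^{1/2}\|u\|_{L^{2}(\Omega)}$ recombine with $\|\nabla u\|_{L^{2}(\Omega)}$ into the $\sigma$-weighted $H^{1}$ norm. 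As in Lemma~\ref{sp/lemma}, one may first argue for smooth $u$ and pass to the limit by density, though this is already absorbed into the statements quoted.
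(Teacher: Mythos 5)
Your proof is correct and follows essentially the same route as the paper: compose the extension operator with the classical Sobolev inequality exactly as in Corollary~\ref{embedding/coro} (but with $\gamma=\sigma^{1/2}$), then use H\"older on the bounded domain to pass from $L^{q}$ to $L^{2}$ norms, and finish with $a+b\leq\sqrt{2}(a^{2}+b^{2})^{1/2}$. Your explicit verification that $1<q<2$ under the stated hypotheses on $p$, and your consistently writing the H\"older exponent as $|\Omega|^{\frac{2-q}{2q}}$, are small but welcome improvements over the paper's presentation, which writes the intermediate exponent as $|\Omega|^{1/p}$ (correct only when $n=2$) before landing on the right final constant.
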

\begin{proof}
Let $u\in H^{1}\left(\Omega\right)$.
From the same discussion in (\ref{proof/coro}), it follows that
\begin{align}
\left\|u\right\|_{L^{p}\left(\Omega\right)}\leq T_{p}A_{q}\left(\Omega\right)\left(\left\|\nabla u\right\|_{L^{q}\left(\Omega\right)}+\sigma^{1/2}\left\|u\right\|_{L^{q}\left(\Omega\right)}\right).\label{embedding/theo/1}
\end{align}
Since $q\in(1,2)$ holds when $p\in(n/(n-1),2n(n-2))$, H\"{o}lder's inequality gives
\begin{align}
\left\|\nabla u\right\|_{L^{q}\left(\Omega\right)}^{q}&\leq\left(\int_{\Omega}\left|\nabla u\left(x\right)\right|^{q\cdot\frac{2}{q}}dx\right)^{\frac{q}{2}}\left(\int_{\Omega}\left|1\right|^{\frac{2}{2-q}}dx\right)^{\frac{2-q}{2}}\nonumber\\
&=\left|\Omega\right|^{\frac{2-q}{2}}\left(\int_{\Omega}\left|\nabla u\left(x\right)\right|^{2}dx\right)^{\frac{q}{2}},\nonumber
\end{align}
where $\left|\Omega\right|$ is the measure of $\Omega$.
Therefore,
\begin{align}
\left\|\nabla u\right\|_{L^{q}\left(\Omega\right)}\leq\left|\Omega\right|^{\frac{1}{p}}\left\|\nabla u\right\|_{L^{2}\left(\Omega\right)}.\label{embedding/theo/2}
\end{align}
In the same manner, we have
\begin{align}
\left\|u\right\|_{L^{q}\left(\Omega\right)}\leq\left|\Omega\right|^{\frac{1}{p}}\left\|u\right\|_{L^{2}\left(\Omega\right)}.\label{embedding/theo/3}
\end{align}
From (\ref{embedding/theo/1}), (\ref{embedding/theo/2}), and (\ref{embedding/theo/3}),
\begin{align*}
\left\|u\right\|_{L^{p}\left(\Omega\right)}&\leq\left|\Omega\right|^{\frac{2-q}{2q}}T_{p}A_{q}\left(\Omega\right)\left(\left\|\nabla u\right\|_{L^{2}\left(\Omega\right)}+\sigma^{1/2}\left\|u\right\|_{L^{2}\left(\Omega\right)}\right)\\
&\leq 2^{1/2}\left|\Omega\right|^{\frac{2-q}{2q}}T_{p}A_{q}\left(\Omega\right)\left\|u\right\|_{H^{1}(\Omega)}.
\end{align*}
\end{proof}

\bibliographystyle{amsplain} 
\bibliography{ref.bib}

\end{document}